\newtheorem{theorem}{Theorem}
\newtheorem{problem}[theorem]{Problem}
\newtheorem{lemma}{Lemma}
\newtheorem{proposition}{Proposition}
\numberwithin{lemma}{section}
\numberwithin{proposition}{section}
\newcommand{\Aut}{\operatorname{Aut}}
\newcommand{\Out}{\operatorname{Out}}
\newcommand{\Soc}{\operatorname{Soc}}
\begin{document}

\title[On groups isospectral to groups with abelian Sylow 2-subgroups]{On finite groups isospectral to groups \\with abelian Sylow 2-subgroups}

\author{M.\,A.\,Grechkoseeva and  A.\,V.\,Vasil'ev}

\address{Sobolev Institute of Mathematics, Novosibirsk, Russia}
\email{grechkoseeva@gmail.com}
\email{vasand@math.nsc.ru}

\keywords{simple group, recognition by spectrum, 
small Ree group, sporadic Janko group}
\subjclass[2020]{20D60, 20D06, 20D08}

\begin{abstract} The spectrum of a finite group is the set of orders of its elements. We are concerned with finite groups having the same spectrum as a direct product of nonabelian simple groups with abelian Sylow 2-subgroups. 
For every positive integer $k$, we find $k$ nonabelian simple groups with abelian Sylow 2-subgroups such that their direct product is uniquely determined by its spectrum in the class of all finite groups. On the other hand, we prove that there are infinitely many finite groups having the same spectrum as the direct cube of the small Ree group $^2G_2(q)$, $q>3$, or the direct fourth power of the sporadic group~$J_1$. 


 \end{abstract}

\maketitle

\section*{Introduction}

Given a finite group $G$, the {\em spectrum} $\omega(G)$ is the set of the orders of elements of $G$. A~group $H$ is {\em isospectral} to $G$ if $\omega(H)=\omega(G)$.  We say that $G$ is {\em recognizable (by spectrum)} if every finite group isospectral to $G$ is isomorphic to $G$, {\em almost recognizable} if there are finitely many pairwise nonisomorphic finite groups isospectral to $G$, and {\em unrecognizable} if there are infinitely many such groups. Determining whether $G$ is recognizable, or almost recognizable, or unrecognizable, or, more generally,  describing groups isospectral to $G$, is known as the recognition of $G$ by spectrum. The problem of recognition by spectrum has become quite popular in recent decades, see the state of the art in the recent survey~\cite{23Survey}.

By the celebrated theorem of V.~D. Mazurov and W. Shi in \cite{12MazShi.t}, a finite group $G$ is unrecognizable if and only if $G$
is isospectral to a group having a nontrivial normal abelian subgroup. In particular, every group with nontrivial solvable radical is unrecognizable, and so the recognition problem is of interest only for groups having the following structure: 
\begin{equation}\label{eq:structure}
L_1\times L_2\times \dots\times L_k=\Soc(G)\leq G\leq \Aut (L_1\times L_2\times \dots\times L_k)
\end{equation}
for some nonabelian simple groups $L_i$, $1\leqslant i\leqslant k$. Here, as usual, the socle $\Soc(G)$ is the subgroup generated by all minimal normal subgroups of~$G$.

In the present paper, continuing investigations of~\cite{93BrShi,24LMVW}, we consider the problem of recognition by spectrum for finite groups with abelian Sylow $2$-subgroups. These groups were described by J.~H. Walter in \cite{69Wal}. In particular, if such a group $G$ satisfies \eqref{eq:structure}, then $|G:\Soc(G)|$ is odd and every $L_i$ is either a~$2$-dimensional linear group $L_2(q)=PSL_2(q)$, or a small Ree group $^2G_2(q)$, or the sporadic Janko group~$J_1$.
Observe that there are further restrictions on $q$ for $L_2(q)$, see Lemma \ref{l:WalSimple} below.
Also recall that the group $^2G_2(q)$ is defined only for $q=3^\alpha$, $\alpha$ odd, and simple exactly when $q>3$.

It is known that a recognizable group can have arbitrarily many nonabelian composition factors \cite[Theorem 4.6]{23Survey}. More surprisingly, such a group can have arbitrarily many pairwise isomorphic nonabelian composition factors \cite{23YGSV}. So the first question we address here is how many nonabelian composition factors a recognizable group with abelian Sylow 2-subgroups can have, and the answer is `arbitrarily many'.

 \begin{theorem}\label{t:inf}
There is an infinite increasing sequence of odd primes $p_i, i\in\mathbb{N},$ such that for every $k\in\mathbb{N}$ and
$$P_k=\prod_{i=1}^k {}^2G_2(3^{p_i}),$$
the equality $\omega(G)=\omega(P_k)$ yields $G\simeq P_k$.
\end{theorem}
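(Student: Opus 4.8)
The plan is to first fix the sequence of primes so that the factors become ``spectrally independent'', and then to reduce an arbitrary isospectral group to the structure \eqref{eq:structure}. Write $q_i=3^{p_i}$; the four maximal tori of ${}^2G_2(q_i)$ have the pairwise independent orders $q_i-1$, $q_i+1$, and $q_i^{\pm}:=q_i\pm 3^{(p_i+1)/2}+1$, with $q_i^{+}q_i^{-}=q_i^2-q_i+1=\Phi_6(3^{p_i})=7\,\Phi_{6p_i}(3)$ (the factorization being valid since $p_i>3$ is coprime to $6$). I would take the $p_i$ to be odd primes, chosen inductively and large, subject to two conditions: that each of $q_i^{+},q_i^{-}$ has a prime divisor of multiplicative order $6p_i$ modulo itself (a primitive prime divisor, which fails only for finitely many $p_i$), and that $p_i$ does not divide $|{}^2G_2(3^{p_j})|$ for any $j$. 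Zsygmondy's theorem then supplies, for each $i$, four characteristic primes $u_i\mid q_i-1$, $w_i\mid q_i+1$, $r_i^{+}\mid q_i^{+}$, $r_i^{-}\mid q_i^{-}$ of multiplicative orders $p_i,2p_i,6p_i,6p_i$; since a primitive prime divisor of $3^{m}-1$ divides $3^{n}-1$ only when $m\mid n$, each of these primes divides $|{}^2G_2(3^{p_j})|$ exactly when $j=i$. As they lie in four distinct, pairwise nonadjacent components of the prime graph of ${}^2G_2(q_i)$, they are pairwise nonadjacent in $P_k$, so none of the six products $u_iw_i,\,u_ir_i^{\pm},\dots$ lies in $\omega(P_k)$.

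Now let $\omega(G)=\omega(P_k)$. Since the Sylow $2$-subgroups of each ${}^2G_2(3^{p_i})$ are elementary abelian, $4\notin\omega(P_k)$, so every $2$-element of $G$ is an involution and $G$ has abelian Sylow $2$-subgroups; hence Lemma~\ref{l:WalSimple} applies. The first genuine step is to show the solvable radical of $G$ is trivial: assuming a minimal normal $r$-subgroup $V$, I would run a Frobenius/Hall--Higman argument, letting a semisimple element of characteristic prime order act with no fixed points on $V$ to produce an element whose order is one of the forbidden products recorded above, contradicting $\omega(G)=\omega(P_k)$. With the radical eliminated, \eqref{eq:structure} gives $\Soc(G)=S_1\times\cdots\times S_m$ with each $S_j\in\{L_2(q'),{}^2G_2(q'),J_1\}$ and $|G:\Soc(G)|$ odd. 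Because every $u_i,w_i,r_i^{\pm}$ exceeds every field degree involved, it is coprime to $|G:\Soc(G)|$, so each cyclic subgroup of that order lies in $\Soc(G)$ and hence in a single factor $S_j$.

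For the identification I would fix $i$ and note that, by the nonadjacency from the first paragraph, the four primes $u_i,w_i,r_i^{+},r_i^{-}$ must all localize to one and the same factor $S_j$: if two of them lay in different factors they would commute and their product would enter $\omega(G)=\omega(P_k)$. Thus $GK(S_j)$ contains a coclique of size $4$; since the prime graph of any $L_2(q')$ has independence number at most $3$, and since $|J_1|$ is not divisible by primes as large as $u_i$, we get $S_j\cong{}^2G_2(3^{a})$ for some odd $a$. Divisibility of $|{}^2G_2(3^a)|$ by the order-$6p_i$ prime $r_i^{+}$ forces $p_i\mid a$; and if $a=p_it$ with $t>1$, then $S_j$ would contain a primitive prime divisor of $3^{6p_it}-1$, a prime not dividing $|P_k|$ and hence outside $\pi(\omega(P_k))$, a contradiction. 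So $a=p_i$ and $S_j\cong{}^2G_2(3^{p_i})$. The distinctness of the $p_i$ yields $k$ pairwise different such factors; a fifth factor of any kind would, by the same coclique/divisibility bookkeeping, either contribute a prime outside $\pi(P_k)$ or create a forbidden product, and a repeated copy of ${}^2G_2(3^{p_i})$ would make $u_i$ (in one copy) adjacent to $r_i^{+}$ (in the other), again forbidden; hence $m=k$ and $\Soc(G)\cong P_k$. Finally, any nontrivial element of $G/\Soc(G)$ would induce a field automorphism of order $p_i$ on some factor, producing an element of order divisible by $p_i$; as $p_i\notin\pi(P_k)$ by our selection, this is impossible, so $G=\Soc(G)\cong P_k$.

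The main obstacle is that isospectrality controls only element orders, never $|G|$, so every factor has to be detected and its parameter pinned down by purely spectral (prime-graph) data. Two points are delicate. First, the elimination of the solvable radical must be matched precisely against the ${}^2G_2$-specific nonadjacencies, so that the coprime action really does manufacture an order outside $\omega(P_k)$. Second, the arithmetic of the selection: I must guarantee simultaneously that all four characteristic primes exist for each $i$ (which is why both $q_i^{+}$ and $q_i^{-}$, not merely their product, are required to carry primitive prime divisors), that these prime sets are pairwise disjoint across $i$ and disjoint from the primes of every competing $L_2(q')$, and that $p_i\notin\pi(P_k)$, so that no coincidental divisibility (such as $p_i\mid a$ with $a>p_i$, or a stray field automorphism) survives. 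Granting the standard spectral and adjacency lemmas for ${}^2G_2(q)$ that precede the theorem, the remainder is the combinatorial assembly of these nonadjacencies into uniqueness of the decomposition.
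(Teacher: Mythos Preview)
Your overall architecture---Walter's theorem to restrict composition factors, a size-$4$ coclique $\{u_i,w_i,r_i^+,r_i^-\}$ for each $i$, primitive prime divisors to pin the parameter of the Ree factor, and $p_i\notin\pi(P_k)$ to kill outer automorphisms---is the paper's. The selection of the $p_i$ is also essentially the same.

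The genuine gap is the order in which you treat the solvable radical. You propose to eliminate a minimal normal $r$-subgroup $V$ \emph{before} identifying anything above it, via ``a Frobenius/Hall--Higman argument, letting a semisimple element of characteristic prime order act with no fixed points on $V$''. But at that stage you have no semisimple element of any specified prime order available: the characteristic primes $u_i,w_i,r_i^{\pm}$ have not yet been located in any composition factor of $G$, so there is nothing to act with. (The phrasing is also inverted: to manufacture a forbidden order $r\cdot|g|$ you need $g$ to act \emph{with} a fixed point on $V$; fixed-point-free action yields nothing.) The paper reverses the order. It applies Walter's theorem to $G/O_{2'}(G)$, which does not require trivial radical; it then identifies every simple factor of $\overline H$ as a specific ${}^2G_2(3^{p_i})$ (Lemma~\ref{l:unique} uses the chief-series Lemma~\ref{l:three} together with Lemmas~\ref{l:hh} and~\ref{l:three_primes} to handle the possibility that one of the four coclique primes hides in $K$); and only once $\overline G\simeq P_k$ is known does it kill $K=O_{2'}(G)$, using the Ree-specific Lemma~\ref{l:ReeRep} inductively across the $k$ factors. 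Your Hall--Higman step, if it is to succeed, must be fed this structural information first. A related circularity: your claim that each $u_i,w_i,r_i^{\pm}$ ``exceeds every field degree involved'' and hence is coprime to $|G:\Soc(G)|$ is unjustified, since at that point the $S_j$ are arbitrary $L_2(q')$ or ${}^2G_2(q')$ and $|\Out(S_j)|$ is unbounded; the paper bypasses this by the chief-series argument rather than by a size comparison.
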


Despite its `positivity', Theorem \ref{t:inf} shows that it is quite hard to expect the solution of the recognition problem for all finite groups with abelian Sylow $2$-subgroups (cf. \cite[Problem~1.4]{24LMVW}). It seems that the reasonable goal here is to solve this problem for the direct powers of simple groups. 

Suppose that $L$ is a nonabelian simple group with abelian Sylow $2$-subgroups. The group $L$ itself is  always recognizable~\cite{93BrShi}. The group $L\times L$  is recognizable if and only if $L$ is either $^2G_2(q)$ or $J_1$ \cite{24LMVW}. In fact,  $L_2(q)\times L_2(q)$ is  unrecognizable for all $q$, regardless of whether Sylow $2$-subgroups of $L_2(q)$  are abelian or not \cite[Proposition~1.3]{24LMVW}. We prove that the direct cube of $^2G_2(q)$ and the fourth direct power of $J_1$ are unrecognizable. 

\begin{theorem}\label{t:cube} Let $L$ and $k$ satisfy one of the following:
\begin{enumerate}
 \item $L={}^2G_2(q)$ and $k=3$;
 \item $L=J_1$ and $k=4$.
\end{enumerate}
Then there are infinitely many finite pairwise nonisomorphic groups $G$ with $\omega(G)=\omega(L^k)$.
\end{theorem}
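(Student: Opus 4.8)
The plan is to construct, for each of the two cases, an infinite family of finite groups that are isospectral to $L^k$ but pairwise nonisomorphic. By the Mazurov–Shi theorem cited in the introduction, the natural way to produce infinitely many isospectral groups is to exhibit a single group of the form $V \rtimes (L^{k-1})$ — where $V$ is a nontrivial irreducible module for $L^{k-1}$ over a finite field — that is isospectral to $L^k$; once one such group exists, one can vary the module (or take larger modules, or iterate the extension) to obtain infinitely many pairwise nonisomorphic examples, all sharing the same spectrum. So the heart of the matter is a single isospectrality statement: I want a module $V$ for $H = L^{k-1}$ such that $\omega(V \rtimes H) = \omega(L^k)$.

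**The key spectral computation** is to understand $\omega(L^k)$ and $\omega(V \rtimes H)$ concretely. For the direct power, $\omega(L^k)$ consists of all least common multiples $\operatorname{lcm}(n_1,\dots,n_k)$ with each $n_i \in \omega(L)$. For the split extension $V \rtimes H$ with $H = L^{k-1}$, an element $(v,h)$ has order equal to $|h|$ when $h$ acts on $V$ without eigenvalue $1$ (more precisely, when $v$ lies in the image of $h-1$ on $V$), and order $p\cdot|h|$ otherwise, where $p = \operatorname{char} V$. So I need to choose $V$ over a field of characteristic $p$ dividing $|L|$, and verify two things: first, that every order arising in $V \rtimes H$ already appears in $\omega(L^k)$ — this controls the orders $p\cdot|h|$ by comparing them against orders $\operatorname{lcm}(n, |h|)$ realized in $L \times H = L^k$ with $p \mid n$; and second, that every element order of $L^k$ is realized in $V \rtimes H$, which amounts to checking that for each $n \in \omega(L)$ with $p \mid n$ there is an $h \in H$ fixing a nonzero vector of $V$ with $|h|$ matching the $L^{k-1}$-part. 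The arithmetic of $\omega(L)$ for $L = {}^2G_2(q)$ and $L = J_1$ is explicitly known, so this reduces to finite casework once $p$ and $V$ are fixed.

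**The main obstacle**, and the reason the threshold is $k=3$ for $^2G_2(q)$ but $k=4$ for $J_1$, is finding a module $V$ whose fixed-point and eigenvalue structure is just right: it must be rich enough that the "bad" orders $p\cdot|h|$ never escape $\omega(L^k)$, yet the cross terms $\operatorname{lcm}$ available only in $L^{k-1}$ (rather than $L^{k-2}$) are exactly what is needed to absorb those products. This is a representation-theoretic search: I would look for a small irreducible $\mathbb{F}_p L$-module $U$ — for $^2G_2(q)$ the natural candidate is a module in the defining characteristic $p=3$, for $J_1$ a module in one of its modular characteristics — and take $V$ to be an appropriate tensor or induced module for $L^{k-1}$, so that the eigenvalue-$1$ condition translates into a controllable divisibility condition on $|h|$. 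Pinning down that the products $p\cdot|h|$ are dominated by genuine $L^k$-orders is precisely where the power must be large enough, and verifying the delicate inequalities among element orders is where the case-by-case numerology lives.

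**Finally**, to pass from one isospectral group to infinitely many, I would observe that replacing $\mathbb{F}_p$ by $\mathbb{F}_{p^m}$ (equivalently, inflating $V$ to a larger field of the same characteristic) changes neither the characteristic nor the eigenvalue-$1$ pattern of the $H$-action, hence preserves the spectrum, while producing groups of unbounded order that are pairwise nonisomorphic. This yields the required infinite family and completes both cases.
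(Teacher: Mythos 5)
There is a genuine gap, and it sits at the very center of your plan: the isospectrality statement you reduce everything to --- a module $V$ for $H=L^{k-1}$ in some characteristic $p$ with $\omega(V\rtimes L^{k-1})=\omega(L^k)$ --- is false for \emph{every} choice of $p$ and $V$, so no representation-theoretic search can find it. Indeed, in $V\rtimes H$ every element maps onto some $h\in H$, and its $|h|$-th power is a commuting element of the $p$-group $V$; hence every element order of $V\rtimes H$ has the form $p^{c}|h|$, and every member of $\omega(V\rtimes L^{k-1})\setminus\omega(L^{k-1})$ is divisible by the single prime $p$. So your construction requires all elements of $\omega(L^k)\setminus\omega(L^{k-1})$ to share the prime divisor $p$, and they do not. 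For $L=J_1$, $k=4$: the numbers $6\cdot 7\cdot 11\cdot 19$, $10\cdot 7\cdot 11\cdot 19$, $15\cdot 7\cdot 11\cdot 19$, $30\cdot 7\cdot 11$, $30\cdot 7\cdot 19$, $30\cdot 11\cdot 19$ all lie in $\omega(J_1^4)\setminus\omega(J_1^3)$ (no element order of $J_1$ is divisible by $30$ or by two of $7,11,19$, so each of these needs four factors), yet the first three force $p\in\{7,11,19\}$ while the last three exclude $7$, $11$ and $19$ in turn. For $L={}^2G_2(q)$, $k=3$: writing $a_1=q-\sqrt{3q}+1$, $a_2=q+\sqrt{3q}+1$, the numbers $9a_1a_2$, $9a_1(q-1)$, $9a_2(q-1)$, $(q-1)a_1a_2$ lie in $\omega(L^3)\setminus\omega(L^2)$; the first three force $p=3$, but the fourth is coprime to $3$. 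This obstruction is insensitive to everything you propose to vary (irreducibility, faithfulness, the field, even replacing $V$ by a $p$-group of larger exponent), so the casework you defer cannot succeed. For the Ree case there is a second, independent obstruction: by Lemma~\ref{l:ReeRep}, any faithful action of ${}^2G_2(q)$ on a $p$-group already creates orders $p|g|$ for all $r$-elements $g$ with $r\neq 3$, e.g.\ the order $4\notin\omega(L^3)$ when $p=2$.

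The paper's construction has a different shape, which is exactly what evades this trap: the simple factors act on nothing. Its key tool is a pigeonhole lemma (Lemma~\ref{l:repl}): if $\mu(H)\subseteq\mu(G)$ and $|\mu(G)\setminus\mu(H)|<k$, then $\omega(G^k)=\omega(G^{k-1}\times H)$, because in an lcm of $k$ distinct members of $\mu(G)$ at least one member must lie in $\mu(H)$. So one only needs a single auxiliary group $H$ with a nontrivial normal abelian subgroup and $\mu(H)\subseteq\mu(L)$ missing at most $k-1$ values: for $J_1$ ($k=4$) the solvable group $H=D_6\times D_{10}$ with $\mu(H)=\{6,10,15\}$; for ${}^2G_2(q)$ ($k=3$) a group $H=W\rtimes M$, where $M\simeq L_2(q)\times\langle -I_4\rangle$ acts on a $4$-dimensional module $W$ in characteristic $3$, with $\mu(H)=\{6,9,q-1,(q+1)/2\}$. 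Then $L^k$ is isospectral to $L^{k-1}\times H$, which has a nontrivial normal abelian subgroup, and the Mazurov--Shi theorem (your finishing step, which is the right one) yields infinitely many pairwise nonisomorphic isospectral groups. Note also that the thresholds $k=3$ and $k=4$ come from counting $|\mu(L)\setminus\mu(H)|$ (the Ree group's $\mu$ can be covered up to the two tori orders $q\pm\sqrt{3q}+1$, while for $J_1$ one inevitably misses $7$, $11$, $19$), not from how rich a module one can find --- so the ``main obstacle'' paragraph of your proposal is aimed at the wrong target.
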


Since unrecognizability of $L^k$ implies unrecognizability of $L^m$ for all $m>k$, to solve the recognition problem  for all direct powers of simple groups with abelian Sylow 2-subgroups, it remains to answer the following question (note that $J_1\times J_1\times J_1$ is almost recognizable if and only if it is recognizable, see  Proposition \ref{p:J1} below). 

\begin{problem}\label{pr:J1}
Is the group $J_1\times J_1\times J_1$ recognizable by spectrum? 
\end{problem}

In conclusion, we observe that there are no results on recognition of 
 groups $G$ that have abelian Sylow 2-subgroups and satisfy \eqref{eq:structure} with $G>\Soc(G)$. In particular, nothing is known even in the case $k=1$, that is, when $G$ is  almost simple but not simple.

\section{Preliminaries}

As usual, $\pi(G)$ denotes the set of prime divisors of $|G|$. The set $\omega(G)$ is closed under taking divisors, and so uniquely determined by the subset $\mu(G)$ consisting of numbers maximal under divisibility. The {\em prime graph} $GK(G)$ is the graph whose vertex set is equal to $\pi(G)$ and two different vertices $p,q\in\pi(G)$ are adjacent if and only if $pq\in\omega(G)$. A {\em coclique} of $GK(G)$ is a set of pairwise nonadjacent vertices, and we denote by $t(G)$ the maximum size of cocliques in $GK(G)$.

Let $r$ be a prime. If $G$ is a finite group, we write $O_r(G)$ and $O_{r'}(G)$ for the largest normal $r$-subgroup and $r'$-subgroup of $G$, respectively. Also $O^{r'}(G)$ denotes the smallest normal subgroup of $G$ for which the quotient is an $r'$-group. If $a$ is a positive integer, then $(a)_r$ is the highest power of $r$ dividing $a$ and $(a)_{r'}=a/(a)_r$. Given positive integers $a_1,\dots, a_k$, we write $(a_1,\dots, a_k)$ and $[a_1,\dots,a_k]$ for their greatest common divisor and least common multiple, respectively.

\begin{lemma}[{Bang \cite{86Bang}, Zsigmondy \cite{Zs}}]\label{l:bzs}
For every positive integers $a,i>1$, one of the following holds:
\begin{enumerate}
 \item there is a prime $r$ that divides $a^i-1$ but does not divide
 $a^j-1$ for all $1\leq j<i$; 
 \item either $i=6$, $a=2$, or $i=2$, $a=2^t-1$ for some $t$.
\end{enumerate}
\end{lemma}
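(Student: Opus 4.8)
This is the classical Zsigmondy theorem, and the plan is to prove it through cyclotomic polynomials. Writing $\Phi_i(x)$ for the $i$-th cyclotomic polynomial, we have $a^i-1=\prod_{d\mid i}\Phi_d(a)$, and a prime $r$ witnesses conclusion (a) exactly when the multiplicative order of $a$ modulo $r$ equals $i$; such an $r$ is a \emph{primitive prime divisor} of $a^i-1$. The first step is therefore to classify the primes dividing $\Phi_i(a)$. I would prove the standard dichotomy: if $r\mid\Phi_i(a)$ and $d$ denotes the order of $a$ modulo $r$, then either $d=i$, so that $r$ is a primitive prime divisor (and automatically $r\nmid i$), or $d<i$, in which case $i=d\,r^{e}$ with $e\ge1$; since $d\mid r-1$, the second alternative forces $r$ to be the \emph{largest} prime divisor of $i$. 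A lifting-the-exponent computation then shows $r\parallel\Phi_i(a)$ in the second case, with the single exception $i=r=2$.

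These facts reduce everything to a size comparison. Assume conclusion (a) fails, so $a^i-1$ has no primitive prime divisor; then every prime dividing $\Phi_i(a)$ falls under the second alternative and hence equals the unique largest prime divisor $p$ of $i$. If $i=2$, the exceptional multiplicity is in play and I instead argue directly: $\Phi_2(a)=a+1$ must be a power of $2$, that is $a=2^{t}-1$, which is the first family in conclusion (b). If $i\ge3$, multiplicity one applies and, since $\Phi_i(a)>1$ for $i>1$ and $a\ge2$, we obtain the rigid equality $\Phi_i(a)=p\le i$.

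The final step contradicts $\Phi_i(a)\le i$ for all but one remaining pair. From $\Phi_i(a)=\prod_{\zeta}(a-\zeta)$ over the primitive $i$-th roots of unity and the bound $|a-\zeta|\ge a-1$, strict because $\zeta\neq1$ once $i>1$, I get $\Phi_i(a)\ge(a-1)^{\varphi(i)}$; a sharper pairing estimate using $(a-\zeta)(a-\bar\zeta)=a^2-2a\cos\theta+1$ is needed precisely for $a=2$, where the crude bound collapses. Comparing this lower bound with $p\le i$ leaves only finitely many small pairs $(a,i)$, which I would dispatch by direct inspection, finding that only $i=6,\ a=2$ survives. The main obstacle is the combination of the lifting-the-exponent bookkeeping guaranteeing multiplicity one (and correctly isolating the $i=2$ escape) with a lower bound on $\Phi_i(a)$ sharp enough to clear the delicate row $a=2$, in which the $i=6$ exception is hidden, without leaving spurious cases.
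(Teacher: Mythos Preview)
The paper does not prove this lemma; it is stated as the classical Bang--Zsigmondy theorem with citations to the original sources and no argument of its own. Your outline is the standard cyclotomic-polynomial proof and is correct as a plan: the dichotomy for primes dividing $\Phi_i(a)$, the lifting-the-exponent multiplicity-one fact (with the $i=r=2$ exception correctly isolated, yielding the $a=2^t-1$ family), and the size comparison $\Phi_i(a)=p\le i$ against a lower bound for $\Phi_i(a)$ are exactly the ingredients, and you have already flagged the one genuinely delicate spot, namely that for $a=2$ the crude bound $(a-1)^{\varphi(i)}$ degenerates and a sharper estimate is needed to isolate the lone surviving exception $i=6$.
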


A prime $r$ satisfying (a) of Lemma \ref{l:bzs} is called a {\em primitive prime divisor} of $a^i-1$. 

The following lemma is well known. 

\begin{lemma} \label{l:num_th} Let $a, i,j$ be integers, $|a|>1$ and $i,j>0$. Then the following hold:
\begin{enumerate}
 \item $(a^i-1,a^j-1)=a^{(i,j)}-1$;
 \item $(a^i-1,a^j+1)$ is equal to $(2,a-1)$ if $(i)_2\leq (j)_2$ and $a^{(i,j)}+1$ otherwise;
 \item If an odd prime $r$ divides $a-1$, then $(a^i-1)_r=(i)_r(a-1)_r$.
\end{enumerate}
\end{lemma}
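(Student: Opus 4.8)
The three parts are classical and essentially independent, so I would treat them in turn. Throughout I would assume $a>1$ and read each gcd as its nonnegative generator; the case of negative $a$ follows from the same manipulations after replacing factors by their absolute values, since divisibility in $\mathbb{Z}$ is insensitive to sign.

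For (a), the plan is to run the Euclidean algorithm on the exponents. Set $d=(i,j)$. One direction is immediate: since $d\mid i$ and $d\mid j$, the identity $x^d-1\mid x^m-1$ (valid whenever $d\mid m$) gives $a^d-1\mid(a^i-1,a^j-1)$. For the reverse, assuming $i\geq j$, the relation $a^i-1=a^{\,i-j}(a^j-1)+(a^{\,i-j}-1)$ yields $(a^i-1,a^j-1)=(a^{\,i-j}-1,a^j-1)$; iterating this reduction mirrors the Euclidean algorithm applied to $i$ and $j$, terminating with exponents $d$ and $0$ and hence with $a^d-1$.

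For (b), the cleanest route is via multiplicative orders. Let $r$ be an odd prime dividing $g:=(a^i-1,a^j+1)$ and let $e$ be the order of $a$ modulo $r$. From $r\mid a^i-1$ we get $e\mid i$, while $r\mid a^j+1$ gives $a^j\equiv-1\not\equiv1\pmod r$, so $e\mid 2j$ but $e\nmid j$, forcing $v_2(e)=v_2(j)+1$; combined with $e\mid i$ this yields $(i)_2>(j)_2$. Thus when $(i)_2\leq(j)_2$ the number $g$ has no odd prime divisor, and a short $2$-adic computation (with $g=1$ when $a$ is even, and exactly one of the two factors having $2$-part $2$ when $a$ is odd) gives $g=(2,a-1)$. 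When $(i)_2>(j)_2$, put $d=(i,j)$, so that $j/d$ is odd and $i/d$ is even; then $a^d+1$ divides both $a^j+1$ (as $x+1\mid x^{\text{odd}}+1$) and $a^i-1$ (as $x+1\mid x^{\text{even}}-1$), whence $a^d+1\mid g$. For the reverse inclusion I would use $g\mid a^j+1\mid a^{2j}-1$, combine with $g\mid a^i-1$ and part (a) to obtain $g\mid a^{(i,2j)}-1=a^{2d}-1=(a^d-1)(a^d+1)$, and then check valuations: for odd $r$ the coprimality $(a^d-1,a^j+1)=(a^d-1,2)$ forces $r\nmid a^d-1$, so $v_r(g)=v_r(a^d+1)$, while the $2$-parts agree because $(a^j+1)_2=(a^d+1)_2$ (as $j/d$ is odd). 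This gives $g=a^d+1$.

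For (c), the statement is the lifting-the-exponent lemma for the odd prime $r\mid a-1$. I would factor $a^i-1=(a-1)\sum_{t=0}^{i-1}a^t$; since $a\equiv1\pmod r$ the sum is $\equiv i\pmod r$, which settles the case $r\nmid i$ and shows $(a^i-1)_r=(a-1)_r$ there. The general case follows by induction on $v_r(i)$ once one establishes the single step $(a^r-1)_r=r\,(a-1)_r$, obtained by expanding $\sum_{t=0}^{r-1}a^t$ via $a=1+(a-1)$ and the binomial theorem, where the oddness of $r$ is essential. I expect the $2$-adic bookkeeping in part (b)---both showing $g$ is a pure power of $2$ in the first case and matching $2$-parts in the second---to be the only genuinely delicate point, the remaining arguments being routine once the right factorizations are written down.
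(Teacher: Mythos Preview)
The paper itself gives no proof of this lemma: it is stated as ``well known'' and used freely thereafter. Your argument is correct and is exactly the standard one. The only cosmetic slip is in the first subcase of (b): when $a$ is odd it is not always true that \emph{exactly} one of $a^i-1$ and $a^j+1$ has $2$-part $2$ (e.g.\ $a=3$, $i=1$, $j=2$), but your analysis only needs that \emph{at least} one of them does, and this holds in the regime $(i)_2\le(j)_2$ by the parity check you outline. Everything else---the Euclidean reduction in (a), the order argument and the identification $(i,2j)=2(i,j)$ in (b), and the LTE induction in (c)---is fine.
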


As we said in Introduction, the structure of finite groups with abelian Sylow $2$-subgroups  were established (up to some details covered only by the classification of finite simple groups) in~\cite{69Wal}. 

\begin{lemma}\label{l:WalSimple}  If $G$ is a nonabelian simple group with an abelian Sylow $2$-subgroup $S$, then $S$ is elementary abelian and $G$ is one of the following groups:
\begin{enumerate}
\item the linear groups $L_2(q)$,  $q>3$, $q\equiv 3, 5 \pmod{8}$ or $q=2^\alpha$;
\item the small Ree groups $^2G_2(q)$,  $q=3^\alpha$,  $\alpha\geq 3$ is odd;
\item the sporadic Janko group $J_1$.
\end{enumerate}
\end{lemma}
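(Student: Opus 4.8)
The plan is to reduce to the classification of finite simple groups and then inspect, family by family, which simple groups have an abelian Sylow $2$-subgroup. Historically this dichotomy is exactly the content of Walter's theorem \cite{69Wal}, whose original proof was a deep piece of local group theory predating the classification; with the classification available, the efficient route is a direct reading of the known Sylow $2$-structure of each family. The structural reason abelianness is so restrictive is that, by Burnside's fusion theorem together with the focal subgroup theorem, a simple group $G$ with abelian Sylow $2$-subgroup $S$ satisfies $S=[S,N_G(S)]$, so $N_G(S)/C_G(S)$ acts on $S$ without a trivial quotient; in the classification-based approach this mostly serves as a guiding constraint while the real work is bookkeeping.

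First I would dispose of the easy families. A Sylow $2$-subgroup of $A_n$ is built from iterated wreath products and is abelian only for small $n$; among the simple alternating groups this leaves only $A_5\cong L_2(4)\cong L_2(5)$, which already appears in (a). For groups of Lie type in characteristic $2$, a Sylow $2$-subgroup is the unipotent radical $U$ of a Borel subgroup, and $U$ is abelian precisely when no two positive roots sum to a root; for an irreducible root system this forces (twisted) rank one of type $A_1$, that is, $G=L_2(2^\alpha)$, whereas $^2B_2(q)$ has a nonabelian Sylow $2$-subgroup of order $q^2$. This yields the $q=2^\alpha$ part of (a).

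The main work, and the step I expect to be the obstacle, is the case of Lie type groups in odd characteristic, where a Sylow $2$-subgroup is assembled from the $2$-parts of the orders of the maximal tori together with the action of the Weyl group. I would argue that once the rank is at least two, a subsystem subgroup of type $A_1\times A_1$ or $A_2$ (whose Sylow $2$-subgroups are nonabelian, built from generalized quaternion groups), or the torus normalizer, already forces a nonabelian Sylow $2$-subgroup, leaving only the rank-one groups $L_2(q)$, $U_3(q)$ and $^2G_2(q)$ to examine by hand. For $L_2(q)$ with $q$ odd the Sylow $2$-subgroup is dihedral of order $(q^2-1)_2$, hence abelian (a Klein four-group) exactly when $(q^2-1)_2=4$, which by a short computation with the $2$-parts of $q-1$ and $q+1$ is equivalent to $q\equiv 3,5\pmod{8}$; for $U_3(q)$ the Sylow $2$-subgroups are nonabelian, while for $^2G_2(q)={}^2G_2(3^\alpha)$ they are elementary abelian of order $8$. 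This produces (a) and (b). Finally I would check the $26$ sporadic groups against their tabulated Sylow $2$-subgroups; the only abelian one is the elementary abelian group of order $8$ arising in $J_1$, giving (c).

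The elementary abelian conclusion then drops out of the same analysis, since each group on the resulting list has an elementary abelian Sylow $2$-subgroup: $V_4$ for $L_2(q)$ with $q\equiv 3,5\pmod{8}$, elementary abelian of order $2^\alpha$ for $L_2(2^\alpha)$, and of order $8$ for $^2G_2(q)$ and $J_1$. The genuine difficulty is organizing the odd-characteristic Lie type case uniformly rather than grinding through each of the many families separately; I would lean on the maximal-tori/Weyl-group description of Sylow $2$-subgroups to keep this under control, and I expect the twisted rank-one groups $U_3(q)$ and $^2G_2(q)$ to demand the most care, since they are precisely where abelian and nonabelian behaviour must be separated.
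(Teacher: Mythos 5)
The paper does not actually prove this lemma: it is quoted as known, with a reference to Walter's theorem \cite{69Wal} together with the remark that some details---essentially the identification of Walter's ``groups of Ree type'' with the Ree groups ${}^2G_2(3^\alpha)$---are covered only by the classification of finite simple groups. Your family-by-family census based on the classification is therefore a genuinely different route, and it is the standard modern way to verify the statement: the paper's citation buys brevity (and matches the historical provenance of the result), while your approach buys a self-contained check and an explanation of where each family fails, via the focal-subgroup constraint, the root-system criterion in characteristic $2$, and the dihedral/quaternion structure in odd characteristic. The overall organization (alternating groups, characteristic $2$, odd characteristic reduced to rank one, sporadic groups) is sound.

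Two details need repair, though neither affects the conclusion. First, for odd $q$ the Sylow $2$-subgroup of $L_2(q)=PSL_2(q)$ is dihedral of order $(q^2-1)_2/2$; it is the Sylow $2$-subgroup of $SL_2(q)$ that is generalized quaternion of order $(q^2-1)_2$. Since exactly one of $q\pm 1$ is congruent to $2$ modulo $4$, one always has $(q^2-1)_2\geq 8$, so your criterion ``$(q^2-1)_2=4$'' is vacuous as written and, taken literally, would leave no odd $q$ at all; the correct condition is $(q^2-1)_2=8$, which is indeed equivalent to $q\equiv \pm 3\pmod{8}$, i.e.\ to $q\equiv 3,5\pmod{8}$. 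Second, in characteristic $2$ the criterion ``no two positive roots sum to a root'' disposes of the untwisted groups, but the twisted families must be inspected separately: besides ${}^2B_2(2^\alpha)$ you must also exclude $U_3(2^\alpha)={}^2A_2(2^\alpha)$, whose Sylow $2$-subgroup is the full unipotent radical of order $q^3$ and is nonabelian, as well as ${}^2F_4(2^\alpha)'$. Your text treats $U_3(q)$ only in the odd-characteristic discussion, so the characteristic-$2$ unitary groups fall outside both halves of your argument as written.
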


\begin{lemma}\label{l:Walter}{\em\cite[Theorem~I]{69Wal}} If $G$ is a finite group with abelian Sylow $2$-subgroups, then the group $O^{2'}(G/O_{2'}(G))$ is a direct product of an abelian $2$-group and some nonabelian simple groups listed in Lemma~{\em\ref{l:WalSimple}}.
\end{lemma}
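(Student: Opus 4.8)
The statement is, up to packaging, the main classification theorem of Walter \cite{69Wal}, so the plan is not to reprove it from scratch but to read off the asserted form from that theorem together with Lemma \ref{l:WalSimple}. The heart of the argument is thus a citation; the only genuine work is to reduce to the situation Walter analyses and to check that the abstract simple direct factors his theorem produces coincide with the explicit list of Lemma \ref{l:WalSimple}.

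First I would reduce to the case of trivial odd core. Put $\bar G=G/O_{2'}(G)$, so that $O_{2'}(\bar G)=1$; since $O_{2'}(G)$ has odd order, a Sylow $2$-subgroup of $\bar G$ is isomorphic to one of $G$ and is therefore abelian. Write $N=O^{2'}(\bar G)$ for the smallest normal subgroup of $\bar G$ with odd-order quotient. As a subgroup of $\bar G$, it again has abelian Sylow $2$-subgroups, and this is exactly the object whose structure Walter's theorem pins down: $N$ is the direct product of an abelian $2$-group $A$ and finitely many nonabelian simple groups.

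It then remains to identify the simple factors. Each such factor $L$ is a subgroup of $N$, hence a section of $G$, so a Sylow $2$-subgroup of $L$ is a section of a Sylow $2$-subgroup of $G$ and is in particular abelian. By Lemma \ref{l:WalSimple}, $L$ must therefore be one of $L_2(q)$ with $q>3$ and $q\equiv 3,5\pmod 8$ or $q=2^\alpha$, a small Ree group $^2G_2(q)$, or $J_1$, with elementary abelian Sylow $2$-subgroup in each case. This gives precisely the decomposition claimed.

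The only real obstacle here is the depth of the result being invoked rather than any manoeuvre in the present argument: Walter's classification is itself a major theorem, and the precise determination of the admissible simple groups underlying Lemma \ref{l:WalSimple} ultimately rests on the classification of finite simple groups. Everything beyond citing these is routine, since abelianness of a Sylow $2$-subgroup is inherited by sections and passage to $\bar G=G/O_{2'}(G)$ alters neither the isomorphism type of the Sylow $2$-subgroup nor the relevant normal structure above the odd core.
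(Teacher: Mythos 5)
Your proposal is correct and matches the paper exactly: the paper offers no independent proof of this lemma, stating it purely as a citation of Walter's Theorem~I in \cite{69Wal}, with the simple factors identified via the list in Lemma~\ref{l:WalSimple}, just as you do. The routine reductions you spell out (passing to $G/O_{2'}(G)$ and noting that abelianness of Sylow $2$-subgroups passes to sections) are implicit in the paper's packaging of Walter's result.
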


We will need some information about simple groups with abelian 2-Sylow subgroups. 

\begin{lemma}
\label{l:ree} 
Let $G={}^2G(q)$, $q=3^\alpha>3$. Then the following hold:
\begin{enumerate}
 \item $|G|=q^3(q-1)(q^3+1)$;
 \item $|\Out G|=\alpha$; 
 \item $\mu(G)=\{6, 9, q-1, (q+1)/2, q-\sqrt{3q}+1, q+\sqrt{3q}+1\}$.
\end{enumerate}
\end{lemma}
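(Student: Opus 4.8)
The plan is to treat the three assertions separately, relying on the classical structure theory of the small Ree groups. Parts (a) and (b) are standard facts from Ree's original construction: the order formula $|G|=q^3(q^3+1)(q-1)$ and the description of $\Out G$ as the cyclic group of field automorphisms of order $\alpha$, there being no diagonal or graph automorphisms for this twisted type. I would simply cite these from a standard reference on the Ree groups, so the substance of the lemma lies in computing $\mu(G)$ in part (c).

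For (c), I would determine $\omega(G)$ by splitting elements into semisimple, unipotent, and mixed types. The semisimple part is governed by the maximal tori of $G$, whose orders are $q-1$, $q+1$, and the two factors of $q^2-q+1$; here one uses the factorization $q^3+1=(q+1)(q^2-q+1)$ together with $q^2-q+1=(q-\sqrt{3q}+1)(q+\sqrt{3q}+1)$, valid since $\sqrt{3q}=3^{(\alpha+1)/2}$ is an integer as $\alpha$ is odd. The tori of orders $q-\sqrt{3q}+1$ and $q+\sqrt{3q}+1$ are cyclic of odd order and self-centralizing, contributing those two orders directly. The unipotent contribution comes from the Sylow $3$-subgroup, which has order $q^3$ and exponent $9$, giving element orders $3$ and $9$.

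The mixed elements are best read off from the centralizer of an involution, which is $C_G(t)\simeq\langle t\rangle\times L_2(q)$. Since $\mu(L_2(q))=\{3,(q-1)/2,(q+1)/2\}$ for $q=3^\alpha$, multiplying by the central involution produces orders $6$, $q-1$, and $\operatorname{lcm}(2,(q+1)/2)$. Here the crucial point, and the main subtlety to get right, is the $2$-adic bookkeeping: as $\alpha$ is odd one has $q\equiv3\pmod 8$, so $(q-1)_2=2$ while $(q+1)_2=4$; hence $(q-1)/2$ is odd and $(q+1)/2$ is even, which is exactly why the maximal order coming from the torus of order $q+1$ is $(q+1)/2$ rather than $q+1$. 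This is consistent with, and indeed forced by, the Sylow $2$-subgroup being elementary abelian of order $8$ (Lemma~\ref{l:WalSimple}), so that no element has order divisible by $4$.

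Finally I would assemble the representative element orders $\{2,3,6,9,q-1,(q+1)/2,q-\sqrt{3q}+1,q+\sqrt{3q}+1\}$ and extract those maximal under divisibility. Since $q\equiv0\pmod3$, every torus order is coprime to $3$, so $9$ is maximal and $6$ divides none of the torus orders; the four torus orders are pairwise incomparable, while $2$ and $3$ are dominated by $6$ and $9$. This yields $\mu(G)=\{6,9,q-1,(q+1)/2,q-\sqrt{3q}+1,q+\sqrt{3q}+1\}$. The step I expect to require the most care is completeness, namely verifying that no further element orders arise from products of commuting semisimple and unipotent elements beyond those already accounted for; for this I would invoke the detailed conjugacy class data for ${}^2G_2(q)$.
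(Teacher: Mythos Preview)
Your argument is correct. The paper's own proof is purely a citation: it refers to Ree's original paper \cite{61ReeG} for (a) and (b) and to \cite[Lemma~4]{93BrShi} for (c), with no further detail. You go further by sketching the structural reasons behind (c)---the four maximal tori, the exponent-$9$ Sylow $3$-subgroup, and the involution centralizer $\langle t\rangle\times L_2(q)$---and you correctly handle the $2$-adic subtlety that forces the contribution from the torus of order $q+1$ to be $(q+1)/2$ rather than $q+1$, via the elementary abelian Sylow $2$-subgroup. This is more informative than a bare citation; the only place you still defer to the literature is the completeness check (no further mixed orders), which amounts to the same conjugacy-class data underlying the reference the paper cites.
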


\begin{proof}
See \cite{61ReeG} for (a) and (b) and \cite[Lemma 4]{93BrShi} for (c).
\end{proof}

\begin{lemma}{\rm\cite[Lemma~3.8]{24LMVW}}\label{l:ReeRep}
Let $G={}^2G_2(q)$, where $q>3$, and  let $g\in G$ be an $r$-element for a prime~$r\neq 3$. If $G$ acts faithfully on a $p$-group $V$ for some prime $p$, then the coset $Vg$ of the natural semidirect product $V\rtimes G$ contains an element of order~$p|g|$.
\end{lemma}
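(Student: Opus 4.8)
The plan is to produce the required element directly inside the semidirect product $H=V\rtimes\langle g\rangle$. Write $n=|g|=r^a$. A direct computation gives $(vg)^n=\bigl(\prod_{i=0}^{n-1}{}^{g^i}v\bigr)g^n=\prod_{i=0}^{n-1}{}^{g^i}v$, since $g^n=1$; when $v$ is taken in an abelian $G$-invariant subgroup of $V$ this product is the additive trace $T_g(v)=\sum_{i=0}^{n-1}{}^{g^i}v$. As the image of $vg$ in $G$ is $g$ of order $n$, one has $n\mid o(vg)$, while $(vg)^{np}=(T_gv)^p$; hence $o(vg)=pn$ precisely when $T_g(v)$ is a nontrivial element of order $p$. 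I would therefore pass to $W=\Omega_1(Z(V))$, a nontrivial $G$-invariant elementary abelian subgroup with $Wg\subseteq Vg$, and reduce the lemma to the purely module-theoretic assertion that $T_g\neq0$ on a suitable $G$-invariant elementary abelian section of $V$ on which $g$ retains order $n$. Viewing such a section as an $\mathbb{F}_p\langle g\rangle$-module, I would split into the cases $p\neq r$ and $p=r$.

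If $p\neq r$, then $\langle g\rangle$ acts coprimely on $V$; by coprime action a nonzero $g$-fixed vector on the faithful module $V/\Phi(V)$ forces $C_V(g)\neq1$, and if $w\in C_V(g)$ has order $p$ then $w$ commutes with $g$ and $o(wg)=\mathrm{lcm}(p,r^a)=pn$ with $wg\in Vg$. Thus the case reduces to showing that the $r$-element $g$ fixes a nonzero vector on every faithful $\mathbb{F}_pG$-module. Since $G$ is simple, such a module has a nontrivial (hence faithful) irreducible constituent $U$, and eigenvalue $1$ of $g$ on $U$ lifts to $C_V(g)\neq0$; so it suffices to prove that every nontrivial irreducible $\mathbb{F}_pG$-module has a $g$-fixed vector. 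This is the substantive content: an $r$-element lies in a cyclic maximal torus of order dividing one of $q-1,\ q+1,\ q-\sqrt{3q}+1,\ q+\sqrt{3q}+1$ (Lemma~\ref{l:ree}), and I would read off the occurrence of the eigenvalue $1$ from the known generic character table of ${}^2G_2(q)$ (equivalently, verify that the multiplicity of the trivial constituent in $U|_{\langle g\rangle}$ is positive via Brauer characters), treating the cross-characteristic modules and, when $p=3$, the defining-characteristic modules.

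If $p=r$, then necessarily $p\neq3$, we are in cross characteristic, and $g$ is unipotent on every $\mathbb{F}_pG$-module, so all its eigenvalues equal $1$ and $T_g=(g-1)^{n-1}$ on any section where $g$ has order $n=p^a$. Hence $T_g\neq0$ is equivalent to $g$ having a Jordan block of size exactly $p^a$, the largest size compatible with order $p^a$. The coprime shortcut is now unavailable, and since passing to the quotient $V/\Phi(V)$ can only enlarge the order of a lift, one cannot freely reduce to an elementary abelian quotient; instead one must exhibit an honest $G$-invariant elementary abelian section of $V$ on which $g$ simultaneously retains order $p^a$ and has a full Jordan block. Establishing this is the main obstacle, and it genuinely needs the modular representation theory of ${}^2G_2(q)$: one must show that an order-$p^a$ element acts on every faithful module with a regular (size-$p^a$) unipotent block. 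When $p=2$ this is immediate, since $\mu(G)$ forces $g$ to be an involution and a nontrivial involution in characteristic $2$ already has a size-$2$ block; for odd $p\neq3$ the element lies in a cyclic torus (Lemma~\ref{l:ree}) and the Jordan structure must be controlled through the decomposition of the relevant cross-characteristic representations. The hypothesis $r\neq3$ is exactly what keeps us out of the defining characteristic: there an order-$3$ element can act with all Jordan blocks of size less than $3$, making $T_g$ vanish, so this is the essential restriction.
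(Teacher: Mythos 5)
Your reduction framework is sound, but it stops exactly where the lemma actually lives. The trace computation $(vg)^{|g|}=v+v^{g}+\dots+v^{g^{|g|-1}}$ is precisely the paper's Lemma~\ref{l:spectrumVG}; the passage to $\Omega_1(Z(V))$, to the Frattini quotient in the coprime case, and to a faithful irreducible constituent are standard and correct; and you have correctly identified the two statements that remain: the eigenvalue $1$ occurs when $p\neq r$, and a unipotent Jordan block of full size $|g|$ occurs when $p=r$. However, neither statement is then proved. ``Read off the occurrence of the eigenvalue $1$ from the known generic character table'' is not viable: the claim must hold for \emph{every} faithful irreducible $\mathbb{F}_pG$-module in \emph{every} characteristic, including primes $p$ dividing $|G|$, where one needs Brauer characters and decomposition matrices rather than the ordinary character table; and for $p=r$ you explicitly concede that the full Jordan block ``must be controlled through the decomposition of the relevant cross-characteristic representations'' without doing so. These are not routine verifications; they are the entire content of the lemma, and they are research-level theorems.

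The single ingredient that fills both gaps at once is the minimal-polynomial theorem of Tiep and Zalesskii: for $G={}^2G_2(q)$ and an $r$-element $g$ with $r\neq 3$, the minimal polynomial of $g$ in any faithful representation of $G$ in characteristic $p\neq 3$ has degree exactly $|g|$ (defining characteristic $p=3$, where necessarily $p\neq r$, is covered by a fixed-point theorem of Guralnick and Tiep). Combined with your own criterion this finishes everything: if $p\neq r$, then $x^{|g|}-1$ is separable, so a minimal polynomial of degree $|g|$ equals $x^{|g|}-1$ and has $1$ as a root, giving the fixed vector; if $p=r$, then $x^{|g|}-1=(x-1)^{|g|}$, so the minimal polynomial is $(x-1)^{|g|}$, which is exactly the full unipotent block required by Lemma~\ref{l:spectrumVG}. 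Note also that the present paper does not prove this lemma at all --- it quotes it as \cite[Lemma~3.8]{24LMVW} --- so the honest comparison is with that proof, whose skeleton your proposal reproduces but whose load-bearing step (the one the hypothesis $r\neq 3$ is tailored to) is missing and cannot be supplied by character-table inspection.
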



\begin{lemma} \label{l:lin}  Let $G=L_2(q)$, where $q>3$ is power of a prime $p$. Then the following hold:
\begin{enumerate}
 \item $\mu(G)=\{p, (q-1)/d, (q+1)/d\}$, where $d=(q-1,2)$;
 \item if $G\leq T\leq \Aut G$, then $t(T)\leq 3$.
\end{enumerate}
\end{lemma}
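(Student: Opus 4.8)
For part (a), the plan is to read the element orders off the standard subgroup structure of $L_2(q)$ (Dickson's classification). A Sylow $p$-subgroup is elementary abelian of order $q$, so every nontrivial unipotent element has order $p$; every other element is semisimple and lies in a cyclic maximal torus, and there are exactly two classes of these, of orders $(q-1)/d$ and $(q+1)/d$. Hence $\omega(G)$ consists precisely of the divisors of $p$, $(q-1)/d$ and $(q+1)/d$. It then remains to check that these three numbers are pairwise incomparable under divisibility: since $p\nmid q^2-1$ and $(q-1)/d$, $(q+1)/d$ are coprime with both quotients exceeding $1$ for $q>3$, no one of them divides another. Therefore they are exactly the numbers maximal under divisibility in $\omega(G)$, which is the assertion.

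For part (b), I would first use part (a) to describe $GK(G)$. Writing $A=\{p\}$, $B=\pi((q-1)/d)$ and $C=\pi((q+1)/d)$, these three sets are disjoint and cover $\pi(G)$. Each is a clique in $GK(G)$: the cyclic split torus of order $(q-1)/d$ contains an element of order $rr'$ for any two primes $r,r'\in B$, and likewise for $C$, while $A$ is a single vertex. (For the upper bound I do not even need to know that there are no edges between these sets; I only need that they are cliques covering $\pi(G)$.)

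Now let $G\le T\le \Aut G$. Since $|\Aut G:G|=d\alpha$ and $d\mid 2$, every prime in $D:=\pi(T)\setminus\pi(G)$ is odd and divides $\alpha$, so it arises from field automorphisms; moreover $B$ and $C$ remain cliques in $GK(T)$, because passing from $G$ to $T$ can only add edges. The heart of the argument is to show that $\{p\}\cup D$ is also a clique in $GK(T)$. Any two primes $s,s'\in D$ are adjacent, because the field automorphisms induced by $T$ form a cyclic group whose order is divisible by $ss'$, which yields an element of $T$ of order divisible by $ss'$. Each $s\in D$ is adjacent to $p$, because a field automorphism $\psi$ of order $s$ has centralizer $L_2(q^{1/s})$ in $G$, and this subgroup contains a unipotent element $u$ of order $p$; choosing $t\in T$ inducing $\psi$, the elements $u$ and $t$ commute, so $T$ contains an element of order divisible by $ps$. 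Consequently $\pi(T)=(\{p\}\cup D)\cup B\cup C$ is covered by three cliques of $GK(T)$, and since any coclique meets each clique in at most one vertex, $t(T)\le 3$.

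The main obstacle is the adjacency analysis for the field-automorphism primes in the last step: one must invoke the description of the centralizer of a field automorphism of order $s$ (that it is $L_2(q^{1/s})$ and in particular contains a $p$-element commuting with the automorphism) and must take care that the resulting elements of order divisible by $ps$ and by $ss'$ genuinely lie in the chosen overgroup $T$, not merely in $\Aut G$. Everything else is bookkeeping, and the upper bound is robust against the extra edges created by diagonal or field automorphisms, since additional edges can only shrink cocliques.
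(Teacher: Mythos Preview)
Your proof is correct and uses the same substantive ingredients as the paper: the three-part decomposition $\pi(G)=\{p\}\cup\pi((q-1)/d)\cup\pi((q+1)/d)$ from part~(a), the abelianness of $\Out G$ to handle the extra primes, and the fact that a field automorphism of prime order centralizes a $p$-element. The only difference is packaging---you build an explicit cover of $\pi(T)$ by three cliques, whereas the paper assumes a coclique of size~$4$ and derives a contradiction from the same facts---so the two arguments are essentially equivalent.
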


\begin{proof} (a) See, for example, \cite[Haupsatz 8.27]{67Hup}.

(b) Suppose that $\sigma$ is a coclique of size 4 in $GK(T)$. Since $\Out G$ is abelian, it follows that $|\sigma\cap\pi(G)|\geq 3$. Then (a) implies that $|\sigma\cap\pi(G)|=3$ and $p\in\sigma$. If $r\in \pi(G)\setminus\sigma$, then $r$ is the order a field automorphism of $G$. But every field automorphism of $G$ centralizes an element of order $p$, and so $pr\in\omega(T)$, a contradiction.
\end{proof}

\begin{lemma}\label{l:J1}
 $\mu(J_1)=\{6,7, 10,11, 15,19\}$.
\end{lemma}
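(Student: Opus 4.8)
The plan is to compute the full spectrum $\omega(J_1)$ and then extract the numbers maximal under divisibility. I would begin with the order $|J_1|=2^3\cdot 3\cdot 5\cdot 7\cdot 11\cdot 19$. By Lemma~\ref{l:WalSimple} a Sylow $2$-subgroup of $J_1$ is elementary abelian (of order $8$), hence of exponent $2$; and since each of $3,5,7,11,19$ divides $|J_1|$ to the first power, the corresponding Sylow subgroups are cyclic of prime order. Consequently every element order is squarefree and divides $2\cdot 3\cdot 5\cdot 7\cdot 11\cdot 19$, so that $\omega(J_1)$ is determined once we know which subsets of $\{2,3,5,7,11,19\}$ are realized as orders of elements.

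To decide this, I would use that $J_1$ is noncyclic, so every cyclic subgroup lies in a maximal subgroup and therefore $\omega(J_1)=\bigcup_M\omega(M)$, the union taken over the seven conjugacy classes of maximal subgroups: $L_2(11)$, $2^3{:}(7{:}3)$, $2\times A_5$, $19{:}6$, $11{:}10$, $7{:}6$, and $D_6\times D_{10}$. I would then read off each spectrum in turn, applying Lemma~\ref{l:lin} to get $\mu(L_2(11))=\{5,6,11\}$ and inspecting the remaining groups directly. The Frobenius structure of $7{:}3$, $19{:}6$, $11{:}10$ and $7{:}6$ forbids any element of order $14,21,22,33,38,57$, so the primes $7,11,19$ turn out to be isolated in the prime graph; the triangle $\{2,3,5\}$ is realized through orders $6$, $10$ and $15$ (the last two coming from $2\times A_5$ and $D_6\times D_{10}$), while no maximal subgroup contains an element of order $30$. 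This yields $\omega(J_1)=\{1,2,3,5,6,7,10,11,15,19\}$, and discarding $1,2,3,5$ (each a proper divisor of a larger member) leaves $\mu(J_1)=\{6,7,10,11,15,19\}$.

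The genuinely nontrivial input is the completeness of the maximal subgroup list: the squarefreeness reduction, the per-subgroup spectra, the absence of elements of order $30$, and the final extraction of $\mu$ are all routine once that list is fixed, but establishing it rests on the detailed structure theory of $J_1$. I would therefore take the maximal subgroups (equivalently, the conjugacy-class and element-order data) from Janko's original determination of the group or from the Atlas, so that in practice the lemma amounts to a reference to tabulated data together with the elementary reductions above.
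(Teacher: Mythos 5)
Your argument is correct, and in the end it rests on the same source as the paper: the paper's entire proof of this lemma is the citation ``See, for example, \cite{85Atlas}'', since the Atlas lists the conjugacy classes of $J_1$ and hence its element orders $1,2,3,5,6,7,10,11,15,19$ directly, from which $\mu(J_1)$ is immediate. Your route is more roundabout: you reconstruct $\omega(J_1)$ from the squarefreeness reduction (Sylow $2$-subgroup elementary abelian, all odd primes dividing $|J_1|$ to the first power) together with the list of maximal subgroups $L_2(11)$, $2^3{:}(7{:}3)$, $2\times A_5$, $19{:}6$, $11{:}10$, $7{:}6$, $D_6\times D_{10}$. That derivation is sound---every element lies in some maximal subgroup, and the union of their spectra is exactly the set above---but note that it consumes a strictly heavier piece of tabulated data than the paper does: the determination of the maximal subgroups of $J_1$ is a deeper fact than its class list, and both sit in the same Atlas entry, so the reduction buys nothing logically. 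What it does buy is structural visibility that the bare citation hides: your analysis shows that $7$, $11$, $19$ are isolated vertices of $GK(J_1)$, and that the orders $6$, $10$, $15$ all arise already inside the solvable subgroup $D_6\times D_{10}$---which is precisely the group $H$ the paper later exploits in the proof of Theorem~\ref{t:cube}(b), where only the containment $\mu(H)\subseteq\mu(J_1)$ is needed and the fact that $H$ embeds in $J_1$ is never used.
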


\begin{proof}
 See, for example, \cite{85Atlas}.
\end{proof}

We conclude with a series of lemmas concerning element orders in group extensions.

\begin{lemma}{\rm\cite[Lemma~1.1]{05Vas.t}}\label{l:three}
Suppose that a finite group $G$ have a normal series of subgroups
$1 \leq K \leq M \leq G$, and distinct primes $p$, $q$ and $r$ are such that $p$ divides $|K|$, $q$ divides $|M/K|$, and $r$ divides $|G/M|$. Then at least one of the numbers $pq$, $pr$, and $qr$ belongs to $\omega(G)$.
\end{lemma}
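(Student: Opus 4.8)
The plan is to argue by contradiction: I assume that none of $pq$, $pr$, $qr$ lies in $\omega(G)$ and take such a configuration $(G,K,M,p,q,r)$ with $|G|$ minimal. The reductions rest on the fact that $\omega(G/N)\subseteq\omega(G)$ for every $N\trianglelefteq G$, since the order of $gN$ divides that of $g$; thus any element order realised in a quotient is realised in $G$. Hence whenever a nontrivial $N\le K$, normal in $G$, yields a quotient still satisfying the hypotheses --- for which one only needs $p\mid|K/N|$, the upper conditions being automatic --- minimality forces one of the forbidden orders in $G/N$ and hence in $G$, a contradiction. Applied to $N=O_{p'}(K)$ and to minimal normal subgroups of $G$ contained in $K$ that are $\ell$-groups with $\ell\neq p$, this pushes $K$ towards being a $p$-group; passing to the subgroup $M\langle y\rangle$ for an $r$-element $y$ with $yM\neq M$, and invoking minimality again, makes $G/M$ cyclic of order $r$; and a similar use of $O_r(M)$ lets me arrange $r\nmid|M|$. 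The target is the configuration in which $K$ is a $p$-group, $G/M\cong C_r$, and $r\nmid|M|$.

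In this configuration the argument closes by coprime action. By the Schur--Zassenhaus theorem $G=M\rtimes\langle y\rangle$ with $|y|=r$, and $y$ acts coprimely on $M$; fix a $y$-invariant Sylow $q$-subgroup $Q$ of $M$, nontrivial because $q\mid|M/K|$. The two missing orders become centraliser statements: if $C_K(y)\neq1$, a nontrivial $y$-fixed $p$-element $v$ commutes with $y$, so $vy$ has order divisible by $pr$; and if $C_Q(y)\neq1$, one gets an element of order divisible by $qr$. Hence $C_K(y)=C_Q(y)=1$. Put $N=KQ$, a group with normal $p$-subgroup $K$ and $N/K\cong Q$; since $|N|$ is prime to $r$, coprime fixed-point theory gives $C_N(y)=1$ (its image in $N/K$ is $C_Q(y)=1$ and $C_N(y)\cap K=C_K(y)=1$). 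Thus $y$ is a fixed-point-free automorphism of $N$ of prime order, so by Thompson's theorem $N$ is nilpotent; then $[K,Q]=1$, and a $p$-element of $K$ together with a $q$-element of $Q$ gives an element of order $pq$, the final contradiction.

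The main obstacle is the reduction of $K$ to a $p$-group (and, likewise, keeping $r$ off $M$): a minimal normal subgroup $L$ of $G$ inside $K$ that is a direct product of nonabelian simple groups and carries the full $p$-part of $|K|$ cannot be quotiented away, and $r$ may similarly hide inside a nonabelian section of $M$. To exclude such an $L$ in a minimal counterexample I would note that $y$ permutes the simple factors of $L$ in orbits of length $1$ or $r$; an orbit of length $r$ yields a diagonal fixed subgroup isomorphic to a factor, which contains a $p$-element and so forces $C_K(y)\neq1$, whence $pr\in\omega(G)$. Thus $y$ stabilises each factor $S$, acting on it as a coprime automorphism, which must have nontrivial fixed points --- otherwise $S$ would be a nonabelian Frobenius kernel, contradicting Thompson's nilpotency of Frobenius kernels. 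The genuinely hard point, where classification-based information on fixed points of coprime automorphisms of simple groups is needed, is to upgrade ``nontrivial fixed points'' to ``a fixed point of order $p$'', which again produces an element of order $pr$. Once nonabelian chief factors are ruled out, the layers become the $p$- and $q$-groups required above, and the coprime-action core completes the proof.
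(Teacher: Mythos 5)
Your coprime-action endgame is correct (Schur--Zassenhaus, $C_K(y)=C_Q(y)=1$, Thompson's theorem on fixed-point-free automorphisms of prime order, nilpotency of $KQ$ forcing an element of order $pq$), and it matches the way the proof of this lemma in the cited source \cite{05Vas.t} closes; note the paper itself gives no proof, only the citation. The gap is in your reductions, exactly where you flagged it, and it is worse than ``genuinely hard'': the statement you need --- that a coprime automorphism $y$ of prime order of a nonabelian simple group $S$ fixes an element of order $p$ for any prescribed prime $p$ dividing $|S|$ --- is false, so no classification-based result can close it. Take $S=L_2(32)$ and $y$ the field automorphism of order $5$ (coprime, since $|S|=2^5\cdot 3\cdot 11\cdot 31$); then $C_S(y)\cong L_2(2)\cong S_3$, which contains no element of order $11$ or $31$. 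In such a configuration the fixed points of $y$ in $S$ produce only element orders $tr$ with $t\notin\{p,q\}$, none of which is forbidden, so minimality yields no contradiction and your induction stalls. A related defect: passing to $G/O_r(M)$ does not arrange $r\nmid|M|$, because $r$ may well divide $|M/O_r(M)|$ (already for $M=A_4$, $r=3$); and since ``$r\nmid|M|$'' is not a hypothesis of the lemma, minimality cannot be invoked to force it.

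The missing idea is to shrink $G$ by normalizers rather than by quotients. Let $P$ be a Sylow $p$-subgroup of $K$. By the Frattini argument $G=KN_G(P)$, whence, writing $N_K(P)=K\cap N_G(P)$ and $N_M(P)=M\cap N_G(P)$, one gets a normal series $1\leq N_K(P)\leq N_M(P)\leq N_G(P)$ with $N_M(P)/N_K(P)\simeq M/K$ and $N_G(P)/N_M(P)\simeq G/M$, while $p$ divides $|N_K(P)|$ because $P\leq N_K(P)$. Since $\omega(N_G(P))\subseteq\omega(G)$, minimality lets you assume $P\trianglelefteq G$ and then replace $K$ by $P$; thus $K$ becomes a $p$-group with no analysis of chief factors at all. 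The same argument applied in $G/K$ to a Sylow $q$-subgroup of $M/K$ lets you assume $M/K$ is a $q$-group; then $M$ is a $\{p,q\}$-group, so $r\nmid|M|$ holds automatically, and your coprime core finishes the proof verbatim. This Frattini reduction is precisely the device the paper itself uses in the proof of Lemma~\ref{l:unique}.
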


\begin{lemma}\label{l:hh}
Let $G$ be a semidirect product of a $t$-group $T$ and a group $\langle x\rangle$ of order $r$, where $t$ and $r$ are different odd primes, and let $[T,x]\neq 1$. If $G$ acts faithfully on a $p$-group $V$, where $p$ does not divide $|G|$, then $C_V(x)\neq 1$.  
\end{lemma}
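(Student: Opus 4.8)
The plan is to argue by contradiction: assume $C_V(x)=1$ and show that this forces $[T,x]=1$, contrary to hypothesis. The argument is representation-theoretic, so first I would reduce to the case that $V$ is an \emph{elementary abelian} $p$-group, i.e.\ a faithful $\mathbb{F}_pG$-module. Passing to the Frattini quotient $\bar V=V/\Phi(V)$ and using coprimality ($p\nmid|G|$), one has $C_{\bar V}(x)=C_V(x)\Phi(V)/\Phi(V)$, so it suffices to produce a nonzero fixed point in $\bar V$; moreover, by the stability lemma for coprime action, the kernel of $G$ on $\bar V$ acts trivially on $V$, so $G$ still acts faithfully on $\bar V$. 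Extending scalars to $F=\overline{\mathbb{F}_p}$ changes neither faithfulness nor the existence of a nonzero fixed vector of $x$, so I may assume $V$ is a faithful $FG$-module; by Maschke's theorem it is a direct sum of irreducible constituents, and $C_V(x)=0$ forces $C_W(x)=0$ for \emph{every} constituent $W$.

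Next I would analyse a single constituent $W$ via Clifford theory. Restricting to the normal subgroup $T$, decompose $W|_T$ into homogeneous components; since $G/T\cong\langle x\rangle$ has prime order $r$, these are permuted by $x$ in orbits of length $1$ or $r$. If some orbit has length $r$, then $x$ cyclically permutes isomorphic components $W_1,\dots,W_r$, and for any $0\ne w_1\in W_1$ the vector $w_1+xw_1+\dots+x^{r-1}w_1$ is a nonzero fixed point of $x$, contradicting $C_W(x)=0$. Hence $W|_T$ is homogeneous for every constituent. Factoring out the kernel of $T$ on $W$ and passing to the isotypic type, I may write $W=U\otimes E$ with $T$ acting faithfully as $\rho_U\otimes 1$ for an irreducible $T$-module $U$ and $x$ acting as $A\otimes B$, where $A$ intertwines $\rho_U$ with its $x$-conjugate; by Schur's lemma $A$ is scalar \emph{iff} $[T,x]$ acts trivially on $W$. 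Since $[T,x]\ne1$ acts faithfully on $V$, it is nontrivial on at least one constituent $W_0$, on which $A$ is non-scalar.

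The crux, and the step I expect to be the \textbf{main obstacle}, is to show that on such a faithful constituent $x$ necessarily has the eigenvalue $1$. Normalising so that $A^r=B^r=1$, the eigenvalues of $A$ and of $B$ are $r$-th roots of unity, and $C_{W_0}(x)\ne0$ holds exactly when some eigenvalue of $A$ is inverse to an eigenvalue of $B$; since $E\ne0$, it therefore suffices to prove that the intertwiner $A$ on the faithful irreducible $U$ has \emph{every} $r$-th root of unity among its eigenvalues. Equivalently, the multiplicities $m_0,\dots,m_{r-1}$ of the eigenvalues of $A$ on $U$ are all positive. These are governed by the character values $\chi(x^i)=\operatorname{tr}(A^i)$ of the extension of $U$ on which $x$ acts as $A$, via $m_j=\tfrac1r\sum_i\omega^{-ij}\chi(x^i)$, and the point is that coprime character theory forces each $\chi(x^i)$ with $1\le i\le r-1$ to be so small --- being controlled by the fixed-point structure of $x^i$ on $T$, as in the Glauberman correspondence and the Weil-representation estimates, where one uses that $x$ centralises the cyclic group $Z(T)$ --- that the $m_j$ cannot differ enough for any of them to vanish. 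It is here that the oddness of both $t$ and $r$ is essential, ruling out the exceptional configurations in which an $r$-element could act fixed-point-freely on an $x$-invariant faithful module.

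Once $A$ is shown to carry all $r$-th roots of unity as eigenvalues, $C_{W_0}(x)\ne0$ follows for any $B$, contradicting $C_V(x)=0$. This completes the plan; the first two paragraphs are routine (coprime reduction and Clifford theory), while the quantitative control of the eigenvalue multiplicities in the faithful irreducible case is the genuinely delicate ingredient.
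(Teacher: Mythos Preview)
The paper's proof is two lines: reduce to $V$ an $\mathbb{F}_pG$-module, observe that $[T,x]\neq 1$ forces $O_r(G)=1$, and then invoke \cite[Theorem~IX.6.2]{82HupBl2}. That theorem is the Hall--Higman--Shult result: if $G$ is $r$-soluble with $O_r(G)=1$ and $V$ is a faithful module in characteristic not dividing $|G|$, then every $r$-element of $G$ has minimal polynomial $X^{|x|}-1$ on $V$ (hence eigenvalue~$1$), with the only exceptions arising when $r$ is a Fermat prime and Sylow $2$-subgroups of $G$ are nonabelian, or $r=2$. Since $|G|$ is odd here, neither exception can occur.

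Your plan is, in effect, an attempt to reprove exactly this theorem in the special case at hand. The reductions in your first two paragraphs are correct and standard, and your handling of the non-homogeneous Clifford case (the length-$r$ orbit producing a fixed vector) is fine. The genuine gap is the ``crux'' paragraph: you correctly isolate the decisive claim --- that on a faithful constituent the intertwiner $A$ carries \emph{all} $r$-th roots of unity as eigenvalues --- but you do not prove it, and the tools you name (Glauberman correspondence, ``Weil-representation estimates'') are not the standard route. The usual argument reduces further to $\bar T$ extraspecial of order $t^{1+2m}$ with $x$ centralising $Z(\bar T)$ (which, as you note, is forced by homogeneity); one then computes $|\operatorname{tr}(A^j)|^2=|C_{\bar T/Z(\bar T)}(x^j)|$ for $j\not\equiv 0\pmod r$ and runs an arithmetic argument on the multiplicities $m_0,\dots,m_{r-1}$. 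It is precisely in that arithmetic that the prime~$2$ produces the Fermat/Mersenne exceptions, and the hypothesis that $t$ and $r$ are both odd is what rules them out --- so your instinct about where oddness enters is right, but the justification you offer is too vague to stand as a proof.

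In short: your outline is sound and completable, but what you label the ``main obstacle'' is the entire content of the theorem the paper simply cites. Either cite it, or carry out the extraspecial reduction and trace computation explicitly.
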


\begin{proof}
We may assume that $V$ is elementary abelian and regard $V$ as $G$-module over a field of order $p$. Since  $[T,x]\neq 1$, it follows that $O_r(G)=1$. Now we apply \cite[Theorem IX.6.2]{82HupBl2}.
\end{proof}

\begin{lemma}\label{l:three_primes}
Let $G$ be a quasisimple finite group acting on a $p$-group $V$  for some prime $p$ not dividing $|G|$. Suppose that $\sigma\subseteq\pi(G)\setminus \{2\}$ and for every $r\in \sigma$,  a Sylow $r$-subgroup of $G$ acts fixed-point-freely on $V$. Then $|\sigma|\leq 2$.
\end{lemma}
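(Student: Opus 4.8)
The plan is to turn the hypothesis into strong local structure at each prime of $\sigma$ and then to play the resulting cyclic Sylow subgroups off against one another by means of Lemma~\ref{l:hh}.

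First I would make the standard reductions. Since $p\nmid|G|$ the action is completely reducible, so passing to a composition factor and factoring out the (necessarily central, hence $\sigma'$-) kernel, I may assume $V$ is a faithful irreducible $\overline{\mathbb F}_p G$-module; as $p\nmid|G|$, ordinary and Brauer character values agree and the usual coprime-action machinery applies. The statement that a Sylow $r$-subgroup $R$ \emph{acts fixed-point-freely} means that $V\rtimes R$ is a Frobenius group with complement $R$; thus $R$ is a Frobenius complement, and as $r$ is odd, $R$ is cyclic. In particular every element $x\in G$ of order $r$ satisfies $C_V(x)=0$.

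Next I would extract the key consequence of Lemma~\ref{l:hh}. Fix $r\in\sigma$ and $x$ of order $r$. If $t\neq r$ is an odd prime and $T$ is any $x$-invariant $t$-subgroup, then $\langle T,x\rangle=T\rtimes\langle x\rangle$ acts faithfully on $V$ with $p\nmid|T\langle x\rangle|$, so Lemma~\ref{l:hh}, applied in contrapositive form to $C_V(x)=0$, forces $[T,x]=1$. Hence an element of order $r\in\sigma$ centralizes every odd $t$-subgroup ($t\neq r$) that it normalizes. Now I invoke the classical Burnside normal $p$-complement theorem: as $G$ is perfect it has no normal $\rho$-complement for any $\rho\in\pi(G)$, so for each $\rho\in\sigma$ the automizer $N_G(R_\rho)/C_G(R_\rho)$ of the cyclic Sylow $\rho$-subgroup $R_\rho$ is nontrivial. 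Since $R_\rho$ is abelian it lies in $C_G(R_\rho)$, so this automizer is a $\rho'$-subgroup of the cyclic group $\Aut(R_\rho)$, and hence a nontrivial cyclic group of order dividing $\rho-1$.

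Finally I would aim for a contradiction from $|\sigma|\geq 3$. The centralizer property above restricts which primes can occur in these automizers: if some odd $u\in\sigma\setminus\{\rho\}$ divided $|N_G(R_\rho)/C_G(R_\rho)|$, then an element $w$ of order $u$ would normalize the odd $\rho$-subgroup $R_\rho$ and act nontrivially on it, whereas $C_V(w)=0$ (because $u\in\sigma$) forces $[R_\rho,w]=1$ by the consequence of Lemma~\ref{l:hh} — a contradiction. Thus, for $r,s,t\in\sigma$, each of the three Sylow subgroups is cyclic and each corresponding automizer is a nontrivial divisor of $\rho-1$ whose prime divisors all lie outside $\sigma$ (e.g.\ the prime $2$ or primes not in $\sigma$). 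The heart of the matter — and the step I expect to be the main obstacle — is to show that these constraints cannot hold simultaneously for three distinct odd primes in a quasisimple group. I would try to close this either by a sharper arithmetic analysis combining the divisibility conditions on the $\rho-1$ with the Sylow orders $|G|_\rho$ forced by cyclicity, or, failing a uniform elementary argument, by appealing to the classification of finite simple groups having cyclic Sylow subgroups at several primes, enumerating the short list of possibilities and checking each directly. All the preceding steps are routine; it is this final over-determination argument that carries the real weight.
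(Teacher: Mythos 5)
Your opening moves coincide with the paper's: it too passes to a faithful irreducible module over a splitting field of characteristic $p$, and it too gets cyclicity of the Sylow $r$-subgroups for $r\in\sigma$ (citing \cite[Theorem 3.3.3]{68Gor}, which is the same Frobenius-complement fact you invoke). But your proof stops exactly where the real difficulty begins, and this is a genuine gap, not a routine one: the constraints you extract for each $\rho\in\sigma$ --- cyclic Sylow $\rho$-subgroup whose automizer is a nontrivial cyclic group of order dividing $\rho-1$ with all prime divisors outside $\sigma$ --- are purely group-theoretic, and they \emph{can} hold simultaneously for three odd primes. For example, in $L_2(29)$ the Sylow $3$-, $5$- and $7$-subgroups are all cyclic of prime order and each has automizer of order exactly $2$ (the relevant normalizers are dihedral), so $\sigma=\{3,5,7\}$ satisfies every condition you derived. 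Consequently no contradiction can be squeezed out of these constraints alone; the obstruction asserted by the lemma lives in the module $V$, not in the subgroup structure of $G$. For the same reason your fallback plan fails: there is no ``short list'' of quasisimple groups with cyclic Sylow subgroups at three or more primes (all groups $L_2(q)$, among many others, qualify), so the proposed enumeration is not a finite check but would amount to a representation-theoretic analysis of infinitely many groups. That analysis is precisely what the paper outsources to Zalesskii's theorem \cite[Theorem~1.1]{99Zal} on eigenvalues of $p$-elements in cross-characteristic representations of quasisimple groups with cyclic Sylow $p$-subgroups; that deep, classification-based result is the actual engine of the proof, and nothing in your outline substitutes for it.

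A secondary, in principle repairable, slip: your application of Lemma~\ref{l:hh} requires an element of order exactly $u$ (the lemma assumes $|x|$ is prime), but divisibility of the automizer of $R_\rho$ by $u$ only guarantees a $u$-element $w$ acting nontrivially on $R_\rho$; if $|w|=u^b$ with $b>1$, the power of $w$ of order $u$ may well centralize $R_\rho$. Closing this would need a prime-power version of Lemma~\ref{l:hh} (which the underlying \cite[Theorem IX.6.2]{82HupBl2} could plausibly support), but the point is moot given the main gap above.
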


\begin{proof}
We may assume that $V$ is a faithful irreducible $FG$-module for some field $F$ of characteristic $p$. Furthermore, we may assume that $F$ is a splitting field for $G$. By \cite[Theorem 3.3.3]{68Gor}, a Sylow $r$-subgroup of $G$ is cyclic. Now the conclusion follows from \cite[Theorem~1.1]{99Zal}.
\end{proof}

\begin{lemma}\label{l:spectrumVG}
Suppose that $G$ is a finite group, $V$ is a vector space over a field of positive characteristic $p$ and and $\varphi: G\rightarrow GL(V)$ is a representation of $G$. Then  the coset $Vg$ of the semidirect product $V\rtimes_\varphi G$ contains an element of order $p|g|$ if 
 and only if the Jordan form of $\varphi(g)$ has a unipotent block of size $(|g|)_p$.
\end{lemma}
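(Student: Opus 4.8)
The plan is to reduce the assertion to a question about the single linear operator $A=\varphi(g)$ and then read the answer off its canonical form. Write $m=|g|$ and $m=p^am'$ with $p\nmid m'$, so that $(|g|)_p=p^a$; since $g^m=1$ we have $A^m=I$. In $V\rtimes_\varphi G$ the multiplication is $(u,h)(v,k)=(u+\varphi(h)v,hk)$, so a straightforward induction gives $(v,g)^n=\bigl(\sum_{j=0}^{n-1}A^jv,\ g^n\bigr)$. In particular, setting $S=\sum_{j=0}^{m-1}A^j$, we get $(v,g)^m=(Sv,1)$, and grouping the exponents in blocks of length $m$ together with $A^m=I$ yields $(v,g)^{mk}=(kSv,1)$ for every $k\geq1$.

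Because $V$ is a vector space in characteristic $p$, every nonzero vector has additive order $p$, so $(v,g)^{mk}=(0,1)$ holds for $k=1$ when $Sv=0$ and first holds at $k=p$ when $Sv\neq0$. Hence the order of $(v,g)$ equals $|g|$ if $Sv=0$ and $p|g|$ otherwise; in particular it is always $|g|$ or $p|g|$. Consequently the coset $Vg$ contains an element of order $p|g|$ if and only if $Sv\neq0$ for some $v$, i.e.\ if and only if the operator $S$ is nonzero.

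It remains to translate $S\neq0$ into Jordan data. Here $S=f(A)$ for $f(x)=1+x+\dots+x^{m-1}=(x^m-1)/(x-1)$. In characteristic $p$ the Frobenius identity gives $x^m-1=(x^{m'}-1)^{p^a}$, and writing $x^{m'}-1=(x-1)g_1(x)$ with $g_1(1)=m'\neq0$ (so $(x-1)\nmid g_1$) we obtain $f(x)=(x-1)^{p^a-1}g_1(x)^{p^a}$. Let $b$ be the multiplicity of $(x-1)$ in the minimal polynomial of $A$; since the minimal polynomial divides $x^m-1=(x-1)^{p^a}g_1(x)^{p^a}$, its prime-to-$(x-1)$ part automatically divides $g_1(x)^{p^a}$, and one checks $0\le b\le p^a$. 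Thus $f(A)=0$ if and only if the minimal polynomial divides $f$, which (comparing the $(x-1)$-parts) happens exactly when $b\le p^a-1$; equivalently $S=f(A)\neq0$ if and only if $b=p^a$. Finally $b$ is precisely the size of the largest Jordan block of $A$ with eigenvalue $1$, and since every such block has size at most $p^a$, the equality $b=p^a$ is the same as the existence of a unipotent block of size $p^a=(|g|)_p$. Combining the two reductions proves the lemma.

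The routine part is the order computation; the step I expect to require the most care is the polynomial bookkeeping, namely the clean factorization of $f$ in characteristic $p$ and the argument that divisibility of the minimal polynomial by $(x-1)^{p^a}$ is equivalent both to $f(A)\neq0$ and to the presence of a size-$(|g|)_p$ unipotent block. One should also note that no field-of-definition issue arises: the eigenvalue $1$ lies in the prime field, and the multiplicity $b$ is computed from the minimal polynomial, which is unaffected by field extension, so the relevant part of the Jordan form is defined over the base field.
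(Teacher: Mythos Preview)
Your proof is correct and follows essentially the same route as the paper: both compute $(v,g)^m=\bigl(\sum_{j=0}^{m-1}A^jv,1\bigr)$, reduce the question to whether the operator $S=\sum_{j=0}^{m-1}A^j$ vanishes, and then decide this by comparing the $(x-1)$-multiplicity in the minimal polynomial of $A$ with that in $x^m-1$. Your write-up is simply more explicit about the factorization $x^m-1=(x-1)^{p^a}g_1(x)^{p^a}$ and about the field-of-definition issue for the relevant Jordan blocks, points the paper leaves implicit.
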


\begin{proof} Let $v\in V$ and $k=|g|$. Then $(gv)^k=v+v^{\varphi(g)}+\dots+v^{\varphi(g)^{k-1}}$, and hence $Vg$ contains an element of order $pk$ if and only if $x^{k-1}+\dots+1$ does not annihilate $\varphi(g)$. This is equivalent to saying that the minimal polynomial $f(x)$ of $\varphi(g)$ does not divide $(x^k-1)/(x-1)$. Since $f(x)$ divides $x^k-1$ and $1$ is a root of $x^k-1$ of multiplicity $(k)_p$, the result follows.
\end{proof}

\section{Recognizable direct products}

In this section we will prove Theorem \ref{t:inf}. Define primes $p_i$ as follows.
\begin{equation}\label{eq:example}
 \text{\begin{minipage}{0.92\textwidth} Let $p_1=5$. For $i>1$, define $p_{i}$ to be the smallest prime larger than $p_{i-1}$ and not dividing $|{}^2G_2(3^{p_j})|$ for any $1\leq j<i$.\end{minipage}}
\end{equation}

We claim that this sequence $\{p_i\}_{i\in\mathbb{N}}$ is as required. Set $q_i=3^{p_i}$ and $R_i={}^2G_2(q_i)$. Given  $k$ a positive integer $k$, let  $P_k=\prod_{i=1}^kR_i$ and suppose that $G$ is a finite group such that $\omega(G)=\omega(P_k)$. We need to prove that $G\simeq P_k$.

Let $K=O_{2'}(G)$ and let $H$ be the preimage of $O^{2'}(G/K)$ in $G$. 
Set $\overline G=G/K$ and $\overline H=H/K$.
By Lemma \ref{l:Walter}, it follows that $\overline H=S\times A$, where $S$ is a direct product of nonabelian simple groups listed in Lemma \ref{l:WalSimple} and $A$ is an abelian 2-group.  

\begin{lemma}\label{l:primes} Let $i\geq 1$.
\begin{enumerate}
 \item $\pi(R_i)\cap\pi(R_j)=\{2,3,7\}$ for all $j\neq i$.
 \item $p_i\not\in\pi(R_j)$ for all $j$, and in particular, $5\not\in \pi(R_j)$ for all $j$. 
 \item $\pi(q_i\pm 1)\neq \{2\}$,  $7$ divides one of $q_i\pm \sqrt{3q_i}+1$, and $\pi(q_i\pm\sqrt{3q_i}+1)\neq \{7\}$.
 \item If $\pi({}^2G_2(3^m))\subseteq\pi(P_k)$ for $m>3$, then $m$ is one of $\{p_1,\dots,p_k\}$.
 \end{enumerate}
 
\end{lemma}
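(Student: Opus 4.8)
The plan is to reduce parts (a), (b), and (d) to a single statement about multiplicative orders, and to treat (c) separately. By Lemma~\ref{l:ree}(a) and the factorizations $q^3+1=(q+1)(q^2-q+1)$ and $q^2-q+1=(q-\sqrt{3q}+1)(q+\sqrt{3q}+1)$, we have $|R_i|=q_i^3(q_i-1)(q_i+1)(q_i-\sqrt{3q_i}+1)(q_i+\sqrt{3q_i}+1)$ with $q_i=3^{p_i}$. First I would record the three ``unavoidable'' primes: $3\mid q_i^3$, $2\mid q_i-1$, and $7\mid q_i^2-q_i+1$, the last because $3^{3p_i}\equiv(-1)^{p_i}=-1\pmod 7$ forces $7\mid q_i^3+1$ while $7\nmid q_i+1$. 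Thus $\{2,3,7\}\subseteq\pi(R_i)$ for every $i$. Since $7\mid|R_j|$ for all $j$, the prime $7$ is never selected by the rule \eqref{eq:example}; together with $p_1=5$ this gives $p_i\neq 3,7$, a fact I will use throughout.

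The central tool is an \emph{order dichotomy}. Writing $\operatorname{ord}_r(3)$ for the multiplicative order of $3$ modulo a prime $r$, I would prove, using Lemma~\ref{l:num_th} and the coprimality relations $(q_i^2-q_i+1,q_i^2-1)=(q_i^2-q_i+1,q_i^2+q_i+1)=1$, that for every odd prime $r\in\pi(R_i)$ with $r\neq 3,7$ one has $\operatorname{ord}_r(3)=p_i$, $2p_i$, or $6p_i$, according as $r$ divides $q_i-1$, $q_i+1$, or $q_i^2-q_i+1$ (the value $6$ is excluded because it would force $r=7$). In particular $p_i\mid\operatorname{ord}_r(3)\mid r-1$, so $r\equiv 1\pmod{p_i}$ and $r>p_i$.

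Granting this, parts (a), (b), (d) are short. For (a), the sets $\{p_i,2p_i,6p_i\}$ and $\{p_j,2p_j,6p_j\}$ are disjoint for distinct primes $p_i,p_j\geq 5$ (a parity-and-size check), so no odd prime outside $\{3,7\}$ lies in both $\pi(R_i)$ and $\pi(R_j)$; with $\{2,3,7\}\subseteq\pi(R_i)\cap\pi(R_j)$ this yields the claimed intersection. For (b): the case $j<i$ is exactly the defining property in \eqref{eq:example}; for $j\geq i$, if $p_i\in\pi(R_j)$ then $p_j\mid\operatorname{ord}_{p_i}(3)\mid p_i-1$, impossible since $p_j\geq p_i>p_i-1$ (the case $j=i$ reading $p_i\mid p_i-1$). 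For (d): by Lemma~\ref{l:bzs} the number $3^m-1$ has a primitive prime divisor $s$ (as $m\geq 5$); then $s$ is odd with $\operatorname{ord}_s(3)=m$, so $s\neq 7$, and $s\in\pi({}^2G_2(3^m))\subseteq\pi(P_k)$ puts $s\in\pi(R_i)$ for some $i\leq k$; the dichotomy gives $m\in\{p_i,2p_i,6p_i\}$, and oddness of $m$ forces $m=p_i$.

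The main obstacle is part (c), specifically $\pi(q_i\pm\sqrt{3q_i}+1)\neq\{7\}$. The first two assertions are routine: $(q_i-1)_2=2$ and $(q_i+1)_2=4$ show that neither $q_i\pm1$ is a power of $2$, so $\pi(q_i\pm1)\neq\{2\}$; and $7\mid q_i^2-q_i+1$ divides exactly one of the two factors (not both, else $7\mid (q_i+\sqrt{3q_i}+1)-(q_i-\sqrt{3q_i}+1)=2\sqrt{3q_i}$). For the last assertion I would pin down the $7$-part: applying Lemma~\ref{l:num_th}(c) with base $3^6$ gives $(3^{6p_i}-1)_7=(3^6-1)_7\,(p_i)_7=7$ since $p_i\neq 7$, and as $7\nmid 3^{3p_i}-1$ this forces $(q_i^3+1)_7=(q_i^2-q_i+1)_7=7$. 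Hence the unique factor among $q_i\pm\sqrt{3q_i}+1$ divisible by $7$ carries exactly one factor of $7$; as both factors exceed $7$ (each is at least $3^5-3^3+1=217$), each has a prime divisor other than $7$, giving $\pi(q_i\pm\sqrt{3q_i}+1)\neq\{7\}$. Justifying this valuation computation cleanly is the one genuinely delicate point; everything else rests on the order dichotomy.
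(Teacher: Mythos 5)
Your proof is correct --- I checked the order dichotomy (the exclusions $\operatorname{ord}_r(3)\in\{1,2,6\}$ force $r\in\{2,7\}$, and the three divisibility cases are mutually exclusive since $q_i^2-q_i+1$ is coprime to $3(q_i^2-1)$), the disjointness of $\{p_i,2p_i,6p_i\}$ and $\{p_j,2p_j,6p_j\}$, the Fermat argument in (b), and the $7$-adic valuation in (c) --- but it is organized differently from the paper's proof. The paper has no unified dichotomy: it proves (a) by direct gcd computations from Lemma~\ref{l:num_th}, namely $(q_i-1,q_j-1)=(q_i-1,q_j^3+1)=2$ and $(q_i^3+1,q_j^3+1)=3^3+1$; it proves (b) from Fermat's little theorem together with $(3^{p_i-1}-1,3^{p_j}-1)=2$ and $(3^{p_i-1}-1,3^{3p_j}+1)\mid 3^3+1$, which force $p_i=7$, excluded by construction; and it proves (d) using a primitive prime divisor $r$ of $3^{6m}-1$ rather than of $3^m-1$: such an $r$ divides the $q^2-q+1$ part of $|{}^2G_2(3^m)|$, and $r\mid q_i^3+1$ gives $6m\mid 6p_i$, hence $m\mid p_i$ and $m=p_i$, with no appeal to the parity of $m$. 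Your dichotomy packages the same arithmetic once and for all, turning (a), (b), (d) into short corollaries --- a genuine economy, and the statement $r\equiv 1\pmod{p_i}$ it yields is stronger than what the paper records. The one point you should make explicit: your version of (d) takes a primitive divisor of $3^m-1$ and needs the oddness of $m$ to rule out $m\in\{2p_i,6p_i\}$; this is legitimate, since ${}^2G_2(3^m)$ is defined only for odd $m$, but unlike the paper's variant your argument genuinely uses it, so it deserves a sentence. Part (c) is essentially the paper's proof: both pin down $(q_i^3+1)_7=7$ (the paper via $(3^{3p_i}+1)_7=(3^3+1)_7$, you via Lemma~\ref{l:num_th}(c) with base $3^6$, which amounts to the same computation) and finish using $q_i\pm\sqrt{3q_i}+1>7$.
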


\begin{proof}
(a) Let $3\neq r\in \pi(R_i)\cap\pi(R_j)$ for $j\neq i$. It follows from Lemma \ref{l:ree}(a) that $r$ divides $((q_i-1)(q_i^{3}+1),(q_j-1)(q_j^{3}+1))$. Applying Lemma \ref{l:num_th}, we see that $(q_i-1, q_j-1)=(q_i-1, q_j^{3}+1)=2$ and $(q_i^3+1,q_j^3+1)=3^3+1$. 

(b) If $i>j$, then the claim is a part of the definition in \eqref{eq:example}. Suppose that $i\leq j$. Then $p_i\leq p_j$, and so $(p_i-1, p_j)=1$.
In particular, $3^{p_i-1}-1$ and $3^{p_j}-1$ are coprime, while  $(3^{p_i-1}-1,3^{3p_j}+1)$ is a divisor of $3^3+1$. Thus if $p_i\in\pi(R_j)$, then applying Fermat Little Theorem and the preceding observations, we see that $p_i=7$.  But $p_i\neq 7$ by  construction.

(c) Since $7$ divides $q_i^3+1$ and is coprime to $q_i+1$, it divides one of $q_i\pm \sqrt{3q_i}+1$.

Note that $(q_i-1)_2=2$, $(q_i+1)_2=4$ and $q_i\geq 3^5$, so $(q_i\pm 1)>(q_i\pm 1)_2$. Similarly, using Lemma \ref{l:num_th} and the fact that $p_i\neq 3, 7$, we calculate that $(q_i^3+1)_7=(3^{3p_i}+1)_7=(3^3+1)_7=7,$ and since  $q_i\pm \sqrt{3q_i}+1>7$, the last  claim follows. 

(d) Let $r$ be a primitive prime divisor of $3^{3m}+1$. Arguing as in (a), we see that $r$ does not divide $q_i-1$ for all $i$. If $r$ divides $q_i^3+1$, then $m$ divides $p_i$ and so $m=p_i$.
\end{proof}


Let $i\in\{1,\ldots,k\}$. By Lemma \ref{l:primes}(c), we can take odd $r_{i1}\in\pi(q_i-1)$,   $r_{i2}\in\pi(q_i+1)$, $r_{i3}\in\pi(q_i-\sqrt{3q_i}+1)$, and $r_{i4}\in\pi(q_i+\sqrt{3q_i}+1)$ with $r_{i3},r_{i4}\neq 7$. By Lemma~\ref{l:ree}(c), the set $\sigma_i=\{r_{i1},r_{i2},r_{i3},r_{i4}\}$ is a coclique of size $4$ in $GK(R_i)$. Applying  Lemma \ref{l:primes}(a), we see that $\sigma_i$ remains to be a coclique in $GK(P_k)$.

\begin{lemma}\label{l:unique}
There is a unique direct factor $S_i$ of $S$ such that $|\sigma_i\cap\pi(S_i)|\geq 3$. The group $\overline G$ normalizes $S_i$ and if $T_i=\overline{G}/C_{\overline G}(S_i)$, then $\sigma_i\subseteq\pi(T_i)$.
\end{lemma}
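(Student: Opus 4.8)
The plan is to establish the two assertions in turn, reducing the uniqueness of $S_i$ to an elementary observation about direct products and reserving the coprime fixed-point lemmas for the primes of $\sigma_i$ that might hide outside~$S$.

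\emph{Uniqueness, granted concentration in $S$.} First I would record the fact that drives everything. If two primes $r,r'\in\sigma_i$ divided the orders of two distinct direct factors $S_{(a)}, S_{(b)}$ of $S$, then commuting elements of orders $r$ and $r'$ would produce an element of order $rr'$ in $S\le\overline H$, whence $rr'\in\omega(G)$, contradicting that $\sigma_i$ is a coclique. Hence all primes of $\sigma_i$ that divide $|S|$ lie in one and the same factor. Consequently, once we know that at least three primes of $\sigma_i$ divide $|S|$, that common factor is the desired $S_i$, and it is automatically the unique factor meeting $\sigma_i$ in at least three of its primes. So everything reduces to showing that \emph{at most one} prime of $\sigma_i$ fails to divide $|S|$.

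\emph{Concentration in $S$.} Since $K$ and $G/H$ have odd order, and an odd prime divides $|H/K|$ exactly when it divides $|S|$, a prime of $\sigma_i$ not dividing $|S|$ must divide $|K|$ or $|G/H|$. I would apply Lemma~\ref{l:three} to the normal series $1\le K\le H\le G$: if one prime of $\sigma_i$ divided $|K|$, a second divided $|H/K|$, and a third divided $|G/H|$, then two of them would be adjacent, contradicting the coclique. As at least one prime of $\sigma_i$ will divide $|S|$ in any of the cases we care about, this rules out the mixed split with one prime in $K$ and one in $G/H$. What Lemma~\ref{l:three} cannot see are the two concentrated configurations: two primes of $\sigma_i$ dividing $|K|$, or two dividing $|G/H|$. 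Excluding these is the main obstacle. Here the coprime-action machinery enters: lifting $S_i$ to act by conjugation on the normal $2'$-subgroup $K$, and on a suitable $S_i$-invariant elementary abelian section of $G/H$, I would use Lemmas~\ref{l:hh} and~\ref{l:three_primes}. A Sylow $r$-subgroup of $S_i$ with $r\in\sigma_i\cap\pi(S_i)$ that acts without nonzero fixed points on such a $u$-section ($u\neq r$, $u\nmid|S_i|$) is precisely what prevents an element of the forbidden order $ru$; but by Lemma~\ref{l:three_primes} at most two of the Sylow subgroups for primes of $\sigma_i\cap\pi(S_i)$ can act fixed-point-freely. Thus, as soon as three primes of $\sigma_i$ sit in $S_i$, any further prime $u$ of $\sigma_i$ lying in $K$ or $G/H$ yields, through a nontrivial fixed point, an element of order $ru$ with $r\in\sigma_i$, i.e.\ an edge inside $\sigma_i$, a contradiction. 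Running this together with the Lemma~\ref{l:three} dichotomy closes the remaining configurations; for the small Ree factor one may instead invoke Lemma~\ref{l:ReeRep} (via Lemma~\ref{l:spectrumVG}) to manufacture the needed element orders directly.

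\emph{Normalization and $\sigma_i\subseteq\pi(T_i)$.} Finally, $\overline G$ acts by conjugation on $\overline H\trianglelefteq\overline G$, permuting the simple direct factors of~$S$. Since conjugation preserves both isomorphism type and prime content and $S_i$ is the unique factor containing at least three primes of $\sigma_i$, every $\overline G$-conjugate of $S_i$ equals $S_i$; hence $\overline G$ normalizes $S_i$ and $T_i=\overline G/C_{\overline G}(S_i)$ satisfies $\operatorname{Inn}(S_i)\cong S_i\trianglelefteq T_i\le\Aut(S_i)$, so $\pi(S_i)\subseteq\pi(T_i)$ accounts for the three (or more) primes of $\sigma_i$ inside $S_i$. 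For the at most one remaining prime $r$, I would again use the fixed-point argument to forbid $r$ from being invisible to $S_i$: were $r$ to divide only $|K|$ (so $r\nmid|\overline G|$ and $r\notin\pi(T_i)$), it would, through $K$ or $C_{\overline G}(S_i)$, meet a Sylow subgroup of $S_i$ with a nontrivial fixed point and thereby create a forbidden product order. Ruling this out forces $r$ to divide $|T_i|$, either through $|S_i|$ or through the outer part $T_i/S_i\le\Out(S_i)$, giving $\sigma_i\subseteq\pi(T_i)$. The delicate point throughout is the bookkeeping of which section of $1\le K\le H\le G$ each coclique prime may occupy, so that the coprimality and faithful $u$-group hypotheses of Lemmas~\ref{l:hh} and~\ref{l:three_primes} are genuinely satisfied.
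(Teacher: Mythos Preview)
Your outline for uniqueness (granted concentration in $S$) and for the normality of $S_i$ is fine and matches the paper. The gap is in the concentration step. You apply Lemma~\ref{l:three} only to the three-term series $1\le K\le H\le G$ and then correctly note that this alone cannot exclude the configurations ``two primes of $\sigma_i$ in $|K|$'' or ``two in $|G/H|$''. But your proposed remedy via Lemmas~\ref{l:hh} and~\ref{l:three_primes} does not work. On the $G/H$ side there is simply no action of $S_i$ (a section of $H$) on the quotient $G/H$, so the fixed-point machinery is unavailable there. On the $K$ side, Lemma~\ref{l:three_primes} only bites once \emph{three} primes of $\sigma_i$ are known to sit inside one quasisimple factor --- which is precisely what you are still trying to prove --- so invoking it here is circular. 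Your fallback ``for the small Ree factor one may instead invoke Lemma~\ref{l:ReeRep}'' is likewise premature: at this stage $S_i$ has not been identified as a Ree group.

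The paper's fix is much simpler and purely combinatorial: refine $1\le K\le H\le G$ to a full chief series of $G$ and apply Lemma~\ref{l:three} at that finer level. Since $K$ and $G/H$ have odd order, they are solvable, so every chief factor lying in $K$ or in $G/H$ is an elementary abelian $p$-group and contributes at most one prime to $\sigma_i$. Lemma~\ref{l:three} then says at most two chief factors meet $\sigma_i$; with $|\sigma_i|=4$ this forces at least three of the primes into a single nonabelian chief factor between $K$ and $H$, hence (by your own commuting argument) into a single simple direct factor $S_i$ of $S$. In particular the ``two in $K$'' configuration is killed because it already uses up two chief factors, and the remaining two primes of $\sigma_i$ would require a third. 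No fixed-point lemmas are needed for this step. The paper deploys Lemmas~\ref{l:hh} and~\ref{l:three_primes} only afterwards, once $S_i$ with $|\sigma_i\cap\pi(S_i)|\ge 3$ is secured, to show that the single leftover prime of $\sigma_i$ cannot lie in $\pi(K)$; the $G/H$ case is handled by the elementary remark that a $p$-element of $\overline G$ centralizing $S_i$ would create an edge inside $\sigma_i$.
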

\begin{proof}

Form a chief series of $G$ by refining $1\leq K\leq H\leq G$. 
By Lemma \ref{l:three}, there are at most two factors of this chief series 
that are not disjoint from $\sigma_i$. Since factors between 1 and $K$ and also between $H$ and $G$ are primary groups and $|\sigma_1|=4$, it follows that $|\pi(K)\cap\sigma_i|\cdot |\pi(G/H)\cap\sigma_i|\leq 1$ and there is a unique direct factor $S_i$ of $S$ such that $|\pi(S_i)\cap \sigma_i|\geq 3$. In view of uniqueness, $S_i$ is normal in $\overline G$.

Suppose that $p\in \sigma_i\setminus\pi(S_i)$. If $p\in\pi(G/H)$, then $p\not\in C_{\overline G}(S_i)$, and so $p\in\pi(T_i)$, as required. 

Assume that $p\in\pi(K)$. Let $P$ be a Sylow $p$-subgroup of $K$ and let $H_i$ be the preimage of $S_i$ in $G$.  We derive a contradiction by showing that $pr\in\omega(H_i)$ for some $r\in\sigma_i\cap \pi(S_i)$. By the Frattini argument, $N_{H_i}(P)/N_K(P)\simeq S_i$, so without loss of generality we may assume that $P$ is normal in $H_i$. Also we may assume that $P$ is elementary abelian. By the Schur--Zassenhaus 
theorem, $H_i=P\rtimes M$, where $M\simeq H_i/P$. If $C_{H_i}(P)\not\leq P$,  then $C_{H_i}(P)=P\times Q$, where $Q$ is a normal $\sigma_i'$-subgroup of $H_i$. Factoring it out, we may assume that $M$ acts on $P$ faithfully. 

Let $N=M\cap K$. Suppose that $C_M(N)\leq N$. If $r\in \sigma_i\cap \pi(S_i)$
and $x\in M$ an element of order $r$, then $C_N(x)<N$. So there is $t\in\pi(N)$ and a Sylow $t$-subgroup $T$ of $N$ such that $x$ normalizes but not centralizes $T$. Observe that $t$ is odd, so applying Lemma \ref{l:hh} to the action of $T\rtimes\langle x\rangle$ on $P$, we see that $pr\in\omega(H_i)$.

Thus $C_M(N)N/N=S_i$. Then $C=C_M(N)^{\infty}$ is a perfect central extension of $S_i$. Applying Lemma \ref{l:three_primes}, we conclude that there is $r\in \sigma_i\cap \pi(S_i)$ such that a Sylow $r$\nobreakdash-\hspace{0pt}subgroup of $C$ acts on $P$ with fixed points, and so $pr\in\omega(H_i)$, a contradiction.
\end{proof}

Let $S_i$ be as in Lemma \ref{l:unique}. Observe that $T_i$ is an almost simple group with socle $S_i$. As we noted above, $5\not\in\pi(P_k)$, and so $S_i\not\simeq J_1$.
Also $S_i\not\simeq L_2(u)$  by Lemma \ref{l:lin}. Thus $S_i\simeq {}^2G_2(3^{m_i})$ for some $m_i$. We can take $r_{i2}$ to be a primitive prime divisor of $3^{2p_i}-1$ and one of $r_{i3}$ or $r_{i4}$ to be a primitive prime divisor of $3^{6p_i}-1$, and since $\pi(S_i)$ contains at least two of the numbers $r_{i2}$, $r_{i3}$, $r_{i4}$, it follows that $p_i$ divides $m_i$. But then $m_i=p_i$ by Lemma \ref{l:primes}(d). Thus $S_i\simeq R_i$ for all $i=1,\dots,k$ and we identify them.

It follows that $\overline H\simeq P_k$ since otherwise $2\prod_{i=1}^k r_{i3}\in\omega(G)\setminus\omega(P_k)$. In particular, $K$ is the solvable radical of $G$. So $G/H$ is isomorphic to a subgroup of $\Out(R_1)\times\cdots\times\Out(R_k)$. Now Lemma \ref{l:ree}(b) and Lemma \ref{l:primes}(b) yield  $G=H$. 

Proving that $K=1$,  we may assume, without loss of generality, that $K$ is an elementary abelian $p$-group for some odd prime $p\in\pi(P_k)$. So $\overline G$ acts on $K$. 

Assume first that $p\neq 3,7$. Then there is a unique $i\in\{1,\ldots,k\}$ such that  $p\in\pi(R_i)$. Set $r=r_{i1}$ if $p\not\in\pi(q_i-1)$ and $r=r_{i2}$ otherwise. Applying Lemma~\ref{l:ReeRep}, we see that that $pr\in\omega(G)\setminus\omega(P_k)$, a contradiction.

Let $p=3$ or $7$. Then $p\prod_{i=1}^k r_{i1}\not\in\omega(P_k)$. 
For $i=1,\ldots,k$, take an element $x_i\in R_i$ of order $r_{i1}$. We claim that the element $x=\prod_{i=1}^kx_i$ has a nontrivial fixed point in $K$, so $G$ contains an element of forbidden order. Indeed, by Lemma~\ref{l:ReeRep}, it follows  that $K_1=C_K(x_1)\neq1$. Since $R_2\times\cdots\times R_k$ centralizes $x_1$, the element $x_1'=\prod_{i=2}^kx_i$ acts on $K_1$. By induction on $k$, $C_{K_1}(x_1')\neq1$, and we are done.

Thus, $K=1$, so Theorem~\ref{t:inf} is proved.

\section{Unrecognizable direct powers}

In this section, we will prove Theorem \ref{t:cube}. The proof relies on the following easy lemma. 

\begin{lemma} \label{l:repl}
If $G$ and $H$ are finite groups such that $
\mu(H)\subseteq \mu(G)$  and $|\mu(G)\setminus\mu(H)|<k$,
then $\omega(G^k)=\omega(G^{k-1}\times H)$.
\end{lemma}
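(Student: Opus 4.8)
The plan is to prove the two inclusions $\omega(G^{k-1}\times H)\subseteq\omega(G^k)$ and $\omega(G^k)\subseteq\omega(G^{k-1}\times H)$ separately, relying throughout on the elementary fact that the order of an element of a direct product is the least common multiple of the orders of its components. Thus for finite groups $A_1,\dots,A_m$ one has $\omega(A_1\times\cdots\times A_m)=\{[a_1,\dots,a_m]:a_i\in\omega(A_i)\}$, and each $\omega(A_i)\subseteq\omega(A_1\times\cdots\times A_m)$ by inserting identities in the remaining factors.

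For the inclusion $\omega(G^{k-1}\times H)\subseteq\omega(G^k)$, I would first observe that $\mu(H)\subseteq\mu(G)$ forces $\omega(H)\subseteq\omega(G)$: since each of $\omega(H)$ and $\omega(G)$ is closed under taking divisors and is therefore the set of all divisors of the numbers in $\mu(H)$, respectively $\mu(G)$, any $h\in\omega(H)$ divides some element of $\mu(H)\subseteq\mu(G)$ and hence lies in $\omega(G)$. Replacing the $H$-component of an arbitrary element of $G^{k-1}\times H$ by a $G$-element of the same order then yields the inclusion.

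The substantive direction is $\omega(G^k)\subseteq\omega(G^{k-1}\times H)$. Since both sets are closed under divisors and every element of $\omega(G^k)$ divides some $[m_1,\dots,m_k]$ with all $m_i\in\mu(G)$, it suffices to show that each such lcm lies in $\omega(G^{k-1}\times H)$. Here I would split into two cases according to whether some value can be placed in the $H$-slot. If some $m_i\in\mu(H)\subseteq\omega(H)$, then assigning $m_i$ to the $H$-factor and the remaining $k-1$ values to the $G$-factors realizes $[m_1,\dots,m_k]$ exactly in $G^{k-1}\times H$. Otherwise every $m_i$ lies in $\mu(G)\setminus\mu(H)$; since $|\mu(G)\setminus\mu(H)|<k$, the pigeonhole principle shows that the $k$ numbers $m_1,\dots,m_k$ take at most $k-1$ distinct values, so $[m_1,\dots,m_k]$ is already the least common multiple of at most $k-1$ elements of $\omega(G)$ and hence lies in $\omega(G^{k-1})\subseteq\omega(G^{k-1}\times H)$ (taking the identity in the $H$-factor).

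The argument is short, and the only point requiring care is the case distinction in the last step: I expect the main, and rather mild, obstacle to be recognizing that the hypothesis $|\mu(G)\setminus\mu(H)|<k$ is exactly what guarantees, via pigeonhole, that when no component is eligible for the $H$-factor the target lcm collapses to a least common multiple of at most $k-1$ terms and can therefore be produced by the $G^{k-1}$-part alone.
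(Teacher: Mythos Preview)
Your proof is correct and follows essentially the same approach as the paper. The paper is slightly terser: it only argues the direction $\mu(G^k)\subseteq\omega(G^{k-1}\times H)$, writing each $a\in\mu(G^k)$ as $[m_1,\dots,m_l]$ with \emph{pairwise distinct} $m_i\in\mu(G)$ and $l\leq k$, so that the case $l=k$ immediately forces some $m_i\in\mu(H)$; your pigeonhole argument with possibly repeated $m_i$ is the same observation in different clothing.
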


\begin{proof}
It suffices to show that  $\mu(G^k)\subseteq \omega(G^{k-1}\times H)$. Every $a\in\mu(G^k)$ can be written as $[m_1,\dots,m_l]$ for $l\leq k$ and some pairwise distinct $m_1,\dots,m_l\in\mu(G)$. If $l<k$, then $a\in\omega(G^{k-1})$. If $l=k$, then at least one of $m_i$ lies in $\mu(H)$, so 
$a\in\omega(G^{k-1}\times H)$. 
\end{proof}

We begin with the proof of Item (b). By Lemma \ref{l:J1}, we have  $\mu(J_1)=\{6,7,10,11,15,19\}$. Set $H=D_6\times D_{10}$, where $D_{2n}$ is the dihedral group of order $2n$. Then $\mu(H)=\{6,10,15\}\subseteq\mu(J_1)$ and $|\mu(J_1)\setminus\mu(H)|=3$, and hence $J_1^4$ is isospectral to $J_1^3\times H$ by Lemma \ref{l:repl}. Since $H$ is solvable, the Mazurov--Shi theorem \cite{12MazShi.t} implies that $J_1^k$ is unrecognizable. 

Proving Item (a), we follow the same lines. 
Recall that $$\mu({}^2G_2(q))=\{6,9, q-1, (q+1)/2, q+\sqrt{3q}+1, q-\sqrt{3q}+1\}.$$ To show that the cube of  ${}^2G_2(q)$ is unrecognizable, it is sufficient to prove the following lemma. 

\begin{lemma} If $q=3^\alpha$, $\alpha\geq 3$ is odd, then there is a group $H$ such that  $O_3(H)\neq 1$ and $\mu(H)=\{6,9,q-1,(q+1)/2\}$.
\end{lemma}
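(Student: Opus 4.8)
The plan is to build $H$ as a semidirect product $V\rtimes K$ with $V$ an elementary abelian $3$-group, so that $O_3(H)\supseteq V\neq 1$ automatically, and with base $K=\langle t\rangle\times L_2(q)$, where $t$ is an involution and $L_2(q)=PSL_2(q)$. The point of this base is that by Lemma \ref{l:lin}(a) we have $\mu(L_2(q))=\{3,(q-1)/2,(q+1)/2\}$, and since $(q-1)_2=2$ while $(q+1)_2=4$, the number $(q-1)/2$ is odd and $(q+1)/2$ is even; hence adjoining the central involution $t$ gives $\mu(K)=\{6,q-1,(q+1)/2\}$. This already matches the target set except for the missing value $9$ (note that $K$ is in fact the centraliser of an involution in ${}^2G_2(q)$, which explains the coincidence). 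So the whole task becomes: choose the module $V$ so that the extension creates an element of order $9$ and nothing forbidden.

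For $V$ I would take the $4$-dimensional $\mathbb{F}_q[L_2(q)]$-module $W=N\otimes N^{[3]}$, the tensor product of the natural $SL_2(q)$-module $N$ with its Frobenius twist; the central $-I$ of $SL_2(q)$ acts as $(-1)(-1)=1$, so $W$ is genuinely an $L_2(q)$-module, and I let the extra involution $t$ act on $V=W$ as the scalar $-1$. Two module facts drive everything. First, a unipotent element $u$ of order $3$ acts on $W$ as $J_2\otimes J_2=J_3\oplus J_1$ in characteristic $3$, so it has a Jordan block of size $3$. Second, the weights of $W$ on a split torus are $\pm2,\pm4$, so a split torus element $\diag(\lambda,\lambda^{-1})$ has eigenvalues $\lambda^{\pm2},\lambda^{\pm4}$, and a nonsplit torus element has eigenvalues $\mu^{\pm2},\mu^{\pm4}$ with $\mu^{q+1}=1$; thus eigenvalue $1$ occurs only for elements whose underlying order divides $4$. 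Here it is essential to use $W$ rather than $\mathrm{Sym}^2N$: the latter also produces a size-$3$ block but carries a zero weight, which would force the forbidden order $3(q-1)/2$, whereas $N\otimes N^{[3]}$ has no zero weight.

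The spectrum of $H$ is then computed via Lemma \ref{l:spectrumVG}: as $V$ has exponent $3$, each coset $V\bar g$, $\bar g\in K$, contains only elements of order $|\bar g|$ and, when $\varphi(\bar g)$ has a Jordan block of size $(|\bar g|)_3$ for eigenvalue $1$, of order $3|\bar g|$. Running through $K$: a unipotent $u$ gives the desired order $9$; the element $tu$ acts as $-\varphi(u)$, which has no eigenvalue $1$, so no order $18$ appears; and since $(q-1)_2=2$ and $(q+1)_2=4$, the condition ``order dividing $4$'' forces the only torus (or $t$-times-torus) elements with fixed points to be involutions, which merely reproduce order $6$. Finally, $K$ has no element of order $3m$ with $m>1$ coprime to $3$, because the centraliser of a unipotent in $L_2(q)$ is a $3$-group; hence no order $9m$ or $3\cdot(\text{torus})$ can arise, and $q+1\notin\omega(H)$ since $3'$-element orders all come from $K$. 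Collecting maximal orders yields $\mu(H)=\{6,9,q-1,(q+1)/2\}$.

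The main obstacle is precisely this last bookkeeping: showing that the extension introduces no element order outside the target set. Everything rests on the two module facts above together with the arithmetic data $(q-1)_2=2$, $(q+1)_2=4$ and $(q\pm1,3)=1$; these guarantee simultaneously that a size-$3$ Jordan block occurs for the unipotent (so $9\in\mu(H)$) and that every nonidentity torus or torus-times-$t$ element acts fixed-point-freely apart from harmless involutions (so no order $3(q-1)/2$, $3(q+1)/2$, $2(q-1)$, $q+1$, $18$, or $27$ is created). Once the Jordan-block/eigenvalue analysis is carried out uniformly through Lemma \ref{l:spectrumVG}, the conclusion $\mu(H)=\{6,9,q-1,(q+1)/2\}$ with $O_3(H)\neq 1$ follows; Lemma \ref{l:repl} (with $G={}^2G_2(q)$, $k=3$) and the Mazurov--Shi theorem then give the unrecognizability of ${}^2G_2(q)^3$.
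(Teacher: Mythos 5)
Your construction is exactly the one in the paper: the module $W=V\otimes V^{(3)}$ over $L=L_2(q)$, extended by the scalar $-1$ (the paper's $M=\varphi(L)\times\langle -I_4\rangle$ is your $\langle t\rangle\times L_2(q)$ acting with $t=-1$), with the spectrum computed via Lemma \ref{l:spectrumVG} from the Jordan block $J_3(1)\oplus J_1(1)$ of unipotents and the eigenvalues $\pm\lambda^{\pm2},\pm\lambda^{\pm4}$ of semisimple elements, using $q\equiv 3\pmod 8$ to rule out fixed points of nontrivial torus elements. The argument is correct and essentially identical to the paper's proof; your aside on why $\mathrm{Sym}^2 N$ would fail (its zero weight forces the forbidden order $3(q-1)/2$) is a nice additional observation not made in the paper.
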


\begin{proof}
As usual, $I_n$ denotes the identity $n\times n$ matrix. We denote by $J_k(\lambda)$ the $k\times k$ Jordan block with eigenvalue $\lambda$. If $g$ is a matrix, then $J(g)$ denotes the Jordan form of $g$.

Let $L=L_2(q)$, let $V$ be the natural 2-dimensional module of $SL_2(q)$ and set $W=V\otimes V^{(3)}$, where $(a_{ij})\in SL_2(q)$ acts on $V^{(3)}$ as $(a_{ij}^3)$ acts on $V$. Then $-I_2\in SL_2(q)$ acts trivially on $W$, and so $W$ is an $L$-module. Denote by $\varphi$ the corresponding homomorphism from $L$ to $GL_4(q)$. It is easy to see that $-I_4\not\in\varphi(L)$, and we set $M=\varphi(L)\times \langle -I_4\rangle$. We claim that $\mu(W\rtimes M)=\{6,9,q-1,(q+1)/2\}$, and so  $W\rtimes M$ is the desired group $H$. 

By Lemma \ref{l:lin}, we have $\mu(L)=\{3,(q-1)/2,(q+1)/2\}$.
Let $g\in L$. If $|g|=3$, then $g$ is conjugate to the image of $J_2(1)$, and so $J(\varphi(g))=J(J_2(1)\otimes J_2(1))=J_3(1)\oplus J_1(1)$. If $|g|\neq 3$, then the characteristic roots of $g$ are $\lambda,\lambda^{-1}$, where $\lambda^{q-1}=1$ or $\lambda^{q+1}=1$.
In this case, the characteristic roots of $\varphi(g)$ are $\lambda^4, \lambda^2, \lambda^{-2}$, and $\lambda^{-4}$.

Observe that $(q-1)/2$ is odd while $(q+1)/2$ is even, and so $\mu(M)=\{6,q-1,(q+1)/2\}$. Let $g\in M$ and $g\neq 1$.
If $|g|=3$, then $g\in \varphi(L)$ and $J(g)=J_3(1)\oplus J_1(1)$ by the preceding paragraph. Hence $\omega(Wg)=\{3,9\}$ by Lemma \ref{l:spectrumVG}. Similarly, if $|g|=6$, then $J(g)=J_3(-1)\oplus J_1(-1)$, and therefore $\omega(Wg)=\{6\}$.

If $|g|$ divides $q-1$ or $(q+1)/2$, then the characteristic roots of $g$ are either $\lambda^4, \lambda^2, \lambda^{-2}, \lambda^{-4}$ or the negatives of these elements. Since $q\equiv 3\pmod 8$, it follows that $8$ divides neither $q-1$ nor $q+1$, so $\lambda^4\neq -1$. If $\lambda^4=1$, then $|g|$ divides 2, so every element of $\omega(Wg)$ divides $6$. If $\lambda^4\neq 1$, then $\omega(Wg)=\{|g|\}$.

Thus $\mu(W\rtimes M)=\mu(M)\cup\{9\}=\{6,9,q-1,(q+1)/2\}$. This completes  the proof of the lemma, and so the proof of Theorem \ref{t:cube} as well.
\end{proof}

We conclude with an observation supplementing Problem \ref{pr:J1}.

\begin{proposition}\label{p:J1}
If $J_1\times J_1\times J_1$ is almost recognizable, then it is recognizable.
\end{proposition}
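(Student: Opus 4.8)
The backbone of my argument is the Mazurov--Shi theorem \cite{12MazShi.t}. The plan is to first reformulate the two notions so that they become directly comparable. By Mazurov--Shi, $J_1^3$ is \emph{un}recognizable (has infinitely many isospectral companions) if and only if some finite group isospectral to $J_1^3$ has a nontrivial normal abelian subgroup; since a nontrivial solvable radical contains a characteristic, hence normal, nontrivial abelian subgroup and conversely, this is equivalent to the existence of an isospectral group with nontrivial solvable radical. Consequently, $J_1^3$ is \emph{almost} recognizable if and only if every finite group isospectral to $J_1^3$ has trivial solvable radical. Granting this reformulation, the Proposition reduces to the following conditional statement, which crucially does \emph{not} require ruling out all exotic isospectral groups: if there exists a group $G\not\simeq J_1^3$ with $\omega(G)=\omega(J_1^3)$ and trivial solvable radical, then there also exists an isospectral group with nontrivial solvable radical. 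Indeed, the latter would contradict almost recognizability, so no such $G$ could exist, giving recognizability.

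Next I would pin down the structure of a hypothetical trivial-radical counterexample $G$. From $\mu(J_1)$ in Lemma~\ref{l:J1} one reads off that the $2$-part and the $3$-part of every element order of $J_1^3$ are at most $2$ and $3$, so $4,9\notin\omega(J_1^3)$. Hence Sylow $2$-subgroups of $G$ are elementary abelian, and by Lemma~\ref{l:Walter} the group $G$ satisfies \eqref{eq:structure} with simple factors drawn from Lemma~\ref{l:WalSimple} and with $|G:\Soc(G)|$ odd. Since $9\in\omega({}^2G_2(q))$ by Lemma~\ref{l:ree}(c) but $9\notin\omega(J_1^3)$, no factor is a Ree group; together with $\pi(J_1^3)=\{2,3,5,7,11,19\}$ and the absence of orders $4$ and $9$, the admissible factors reduce to $J_1$ and a short finite list of groups $L_2(q)$, and the number of factors is bounded. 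In the case $\Soc(G)\simeq J_1^3$ I would finish directly: as $\Out(J_1)=1$ and the index is odd, $G$ is either $J_1^3$ or $J_1^3\rtimes C_3$ with $C_3$ permuting the factors; but the cube of $(a,1,1)\sigma$ with $|a|=6$ is $(a,a,a)$, so the latter contains an element of order $18\notin\omega(J_1^3)$ (its $3$-part is $9$). Thus any counterexample must have $\Soc(G)\not\simeq J_1^3$.

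It then remains to convert such a $G$, with $\Soc(G)\not\simeq J_1^3$, into an isospectral group with nontrivial solvable radical, completing the contradiction. Here I would follow the module technique of Theorem~\ref{t:cube}(a): choose a simple factor $L$ of $\Soc(G)$ and a faithful $\mathbb{F}_pL$-module $V$ with $p\in\pi(J_1^3)$, engineered (as $W=V\otimes V^{(3)}$ was engineered there) so that by Lemma~\ref{l:spectrumVG} the orders $p|g|$ arising in $V\rtimes(\cdot)$ all stay inside $\omega(J_1^3)$ while the maximal orders are preserved; the module $V$ is then a nontrivial normal abelian subgroup of an isospectral group. I expect this last step --- equivalently, the precise classification and disposal of the surviving exotic socles --- to be the main obstacle, and for a structural reason: the prime graph $GK(J_1^3)$ is complete, so $t(J_1^3)=1$ and the coclique method that determined the socle in Lemma~\ref{l:unique} for the Ree case is entirely unavailable. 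The socle must instead be controlled through fixed-point and module arguments in the spirit of Lemmas~\ref{l:three_primes} and~\ref{l:spectrumVG}, which is exactly the difficulty isolated in Problem~\ref{pr:J1}.
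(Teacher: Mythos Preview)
Your first two paragraphs are sound and match the paper: the Mazurov--Shi reformulation, the reduction to groups satisfying \eqref{eq:structure}, the exclusion of Ree factors via $9\notin\omega(J_1^3)$, and the elimination of $J_1^3\rtimes C_3$ are all correct. The gap is entirely in your third paragraph.

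Your proposed step, ``convert such a $G$ into an isospectral group with nontrivial solvable radical'', is not a viable route, and it is not what the module technique of Theorem~\ref{t:cube}(a) does. That construction does not start from an exotic isospectral group and transform it; it builds a specific solvable-by-simple $H$ by hand, independent of any $G$. If you could carry out an analogous construction here, you would have shown outright that $J_1^3$ is unrecognizable, rendering the proposition vacuous rather than proved, and you would have answered Problem~\ref{pr:J1} negatively. In particular, the difficulty you identify at the end is the difficulty of Problem~\ref{pr:J1}, not of the present proposition, and the two are not the same: the proposition is a conditional statement that is provable in full by elementary means, while Problem~\ref{pr:J1} asks whether an isospectral group with nontrivial solvable radical exists at all.

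What the paper does instead is finish your second paragraph by a direct and complete case analysis. First, sharpen ``a short finite list of $L_2(q)$'' to the exact list: the constraints $\pi(L)\subseteq\{2,3,5,7,11,19\}$ and $9\notin\omega(L)$ force each simple factor to be $J_1$, $L_2(11)$, or $Alt_5$. Write $\Soc(G)=J_1^{\,k}\times L_2(11)^{\,l}\times Alt_5^{\,m}$. Since $|\Out L|_{2'}=1$ for each of these $L$ and $|G:\Soc(G)|$ is odd, $G$ decomposes as $H_1\times H_2\times H_3$ with $H_i$ between the direct power and the wreath product. Now run through $k=3,2,1,0$: for $k=3$ one gets $G\simeq J_1^3$ (your argument); for $k=2$ the primes $7,19$ force $l=1$, $m=0$, but then $7\cdot 11\cdot 19\notin\omega(G)$; for $k=1$ one forces $7$ or $19$ into the permutation part of $H_3$ and obtains a forbidden multiple of $22$; for $k=0$ one finds an element of order $7\cdot 19$ in the permutation part whose centralizer in the base contains an element of order $22$, again a contradiction. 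This finite check is the entire content of the proposition and requires no module constructions.
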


\begin{proof}
Let $G$ be a group isospectral to $J_1^3$. By the Mazurov--Shi theorem, the hypothesis of the proposition means that $G$ satisfies \eqref{eq:structure}. 

Let $L$ be one of the nonabelian simple groups $L_i$ in \eqref{eq:structure}.
Then  $L$ is among groups  listed in Lemma \ref{l:WalSimple}. Also  $\pi(L)\subseteq\{2,3,5,7,11,19\}$ and $9\not\in\omega(L)$. This implies that $L$ is isomorphic to $J_1$, $L_2(11)$, or $Alt_5$. Note that $\mu(Alt_5)=\{2,3,5\}$ and $\mu(L_2(11))=\{5,6,11\}$.

Thus $\Soc(G)=L_1^k\times L_2^l\times L_3^m$, where $L_1\simeq J_1$, $L_2\simeq L_2(11)$ and $L_3\simeq Alt_5$. Since $|G/\Soc(G)|$ is odd and  $|\Out(L_i)|_{2'}=1$ for all $1\leq i\leq 3$, it follows that  $G=H_1\times H_2\times H_3,$ where $L_1^k\leq H_1\leq L_1\wr S_k$, $L_2^l\leq H_2\leq L_2\wr S_l$, and $L_3^m\leq H_3\leq L_3\wr S_m$.  Set $M=H_2\times H_3$ and let $B_i$ be a complement of $\Soc(H_i)$ in $H_i$. Observe that $|B_i|$ is odd and for every $r\in\pi(B_i)$ and $a\in\omega(L_i)$, we have $ra\in\omega(H_i)$. 

It is clear that $k\leq 3$. If $k=3$ then $M=1$. Also $B_1=1$ since otherwise $|B_1|=3$ and $9\in\omega(G)$. Therefore, in this case $G\simeq J_1^3$. 

Suppose that $k=2$. Then $l\leq 1$, and so $M=L_2^l\times H_3$. Since $19, 7\not\in\pi(M)$, it follows that $\omega(M)$ contains $11$ but not a proper multiple of $11$. This is possible only if $l=1$ and $m=0$. But then $\omega(G)\neq \omega(J_1^3)$.

Suppose that $k=1$. Then $l\leq 2$, and again $M=L_2^l\times H_3$. This time 
$\omega(M)$ contains $11r$ for some $r\in\{7,19\}$ but not a proper multiple of $11r$. It follows that $r$ divides $|B_3|$. Now we see that $22r\in\omega(M)$, regardless of whether $l>1$ or $11\in\pi(B_3)$. 

Finally, let $k=0$. Then $\omega(B_2\times B_3)$ contains $7\cdot 19$ but not $7\cdot 19\cdot 11$, and so $l>1$. It follows that $m=0$ because otherwise  $22\cdot 7\cdot 19\in\omega(G)$. If $7\cdot 19\cdot 5\in\omega(B_2)$, then  $7\cdot 19\cdot 5\cdot 6\in\omega(G)$, which is not the case. Hence an element $x\in B_2$ of order $7\cdot 19$ must centralize an element of order 10 in $\Soc(H_2)$. The centralizer $C_{\Soc(H_2)}(x)$ is isomorphic to some direct power of $L_2(11)$, so if it contains an element of order $10$, then   it contains an element of order $22$ as well. This contradiction completes the proof.
\end{proof}

\textbf{Acknowledgments.} The authors acknowledge the support by the Sobolev Institute of Mathematics state contract (project FWNF-2022-0002).

\end{document}